\documentclass[12pt]{amsart}
\usepackage{amssymb}
\usepackage{mathtools}
\usepackage[english, french]{babel}
\usepackage{epsfig}
\setlength{\textheight}{20cm} \textwidth16cm \hoffset=-2truecm

\usepackage{mathptmx}
\usepackage{amsmath,amsfonts,amssymb}
\usepackage{mathtools}
\usepackage{mathrsfs}
\usepackage[all]{xy}
\usepackage{graphicx}
\usepackage{latexsym}
\usepackage{verbatim}

\numberwithin{equation}{section}

\def\Re{{\sf Re}\,}
\def\Im{{\sf Im}\,}

\newcommand{\D}{\mathbb D}

\newcommand{\R}{\mathbb R}
\newcommand{\Ha}{\mathbb H}

\newcommand{\C}{\mathbb C}

\newcommand{\N}{\mathbb N}

\def\Re{{\sf Re}\,}
\def\Im{{\sf Im}\,}







\def\Re{{\sf Re}\,}
\def\Im{{\sf Im}\,}

\def\Re{{\sf Re}\,}
\def\Im{{\sf Im}\,}

\def\1#1{\overline{#1}}
\def\2#1{\widetilde{#1}}
\def\3#1{\widehat{#1}}
\def\4#1{\mathbb{#1}}
\def\5#1{\frak{#1}}
\def\6#1{{\mathcal{#1}}}

\def\Re{{\sf Re}\,}
\def\Im{{\sf Im}\,}

\newcommand{\mcite}[1]{\csname b@#1\endcsname}

\theoremstyle{theorem}

\setcounter {result} {64}






\def\Re{{\sf Re}\,}
\def\Im{{\sf Im}\,}

\emergencystretch15pt \frenchspacing

\newtheorem{theorem}{Theorem}[section]
\newtheorem{lemma}[theorem]{Lemma}
\newtheorem{proposition}[theorem]{Proposition}
\newtheorem{corollary}[theorem]{Corollary}

\theoremstyle{definition}
\newtheorem{definition}[theorem]{Definition}

\theoremstyle{remark}
\newtheorem{remark}[theorem]{Remark}

\numberwithin{equation}{section}

\title[Semigroup-fication]{Semigroup-fication of univalent self-maps  of the unit disc}

\author[F. Bracci]{Filippo Bracci$^\dag$}
\address{F. Bracci: Dipartimento di Matematica, Universit\`a di Roma ``Tor Vergata", Via della Ricerca
Scientifica 1, 00133, Roma, Italia.} \email{fbracci@mat.uniroma2.it}

\author[O. Roth]{Oliver Roth}
\address{O. Roth: Department of Mathematics, University of W\"urzburg, Emil Fischer Strasse 40, 97074, W\"urzburg, Germany.} \email{roth@mathematik.uni-wuerzburg.de}

\keywords{}

\thanks{$^\dag\,$Partially supported by PRIN 2017 {\sl Real and Complex Manifolds: Topology, Geometry and holomorphic dynamics}, Ref: 2017JZ2SW5, by GNSAGA of INdAM and  by the MIUR Excellence Department Project awarded to the
Department of Mathematics, University of Rome Tor Vergata, CUP E83C18000100006}

\long\def\REM#1{\relax}

\begin{document}
\maketitle

\selectlanguage{english}
\begin{abstract}
Let $f$ be a univalent self-map of the unit disc. We introduce a technique,
that we call {\sl semigroup-fication}, which allows to construct a continuous
semigroup $(\phi_t)$ of holomorphic self-maps of the unit disc whose time one
map $\phi_1$ is, in a sense, very close to $f$. The semigrup-fication of $f$ is of the same type as $f$ (elliptic, hyperbolic, parabolic of positive step or parabolic of zero step) and there is a one-to-one correspondence between the set of boundary regular fixed points of $f$ with a given multiplier and the corresponding set for $\phi_1$. Moreover, in case $f$ (and hence $\phi_1$) has no interior fixed points, the slope of the orbits converging to the Denjoy-Wolff point is the same.  The construction is based on holomorphic models, localization techniques and Gromov hyperbolicity. As an application of this construction, we  prove that in the non-elliptic case, the orbits of $f$ converge non-tangentially   to the Denjoy-Wolff point if and only if the Koenigs domain of $f$ is ``almost symmetric'' with respect to vertical lines.
\end{abstract}

\tableofcontents

\section{Introduction}
Iteration theory in the unit disc and more generally on complex manifolds has been a subject of study for more than one century, starting from the work of Schr\"oder in the 1870's and Koenigs in the 1880's. We refer the reader to the book of M. Abate \cite{Abate} for history and complete bibliography on the subject  (see also \cite{BCDbook} and \cite{CoMac, Sha}, where iteration theory in the unit disc is developed for applications to composition operators).

One of the most striking results in iteration theory  is that every
holomorphic self-map of the unit disc  $\D:=\{\zeta\in \C: |\zeta|<1\}$ can be
``linearized''. To be precise,  let  $f:\D \to \D$ be holomorphic. The Schwarz
lemma implies that  $f$ is either the identity map or has at most one fixed point in
$\D$. In case $f$ has no fixed points in $\D$ or  a fixed point $z_0\in
\D$ such that $f'(z_0)\neq 0$,   there exists a holomorphic function $h:\D \to
\C$ such that $h\circ f=\psi\circ h$, where $\psi$ is a suitable automorphism
of $\C$. This linearization model was developed over a period of many decades 
starting from  Koenigs~\cite{Koe}, Valiron~\cite{Va}, Pommerenke~\cite{Po}, Baker
and Pommerenke~\cite{BaPo}, Cowen~\cite{Co} and most recently Arosio and the
first named author~\cite{AB}. 
It breaks into three main cases, called elliptic,
hyperbolic and parabolic.

If $f$ has a fixed point $z_0\in \D$---in this case $f$ is called {\sl
  elliptic}---one can choose $\psi(z)=\lambda z$  (with $\lambda=f'(z_0)$) and
$\bigcup_{n\leq 0}\lambda^n h(\D)$ equal either to $\D$ (if $f$ is an
automorphism of $\D$) or equal to $\C$. In case $f$ is not an automorphism of $\D$, then the sequence of iterates of $f$, $\{f^{\circ n}(z)\}$, converges to $z_0$.

In case $f$ has no fixed point in $\D$, there exists a unique point $\tau\in
\partial \D$, called the {\sl Denjoy-Wolff point} of $f$, such that
$\{f^{\circ n}(z)\}$  converges to $\tau$ for all $z\in \D$. In addition, the
non-tangential limit of $f$ at $\tau$ is $\tau$, {\sl i.e.},
$\angle\lim_{z\to\tau}f(z)=\tau$, and $\angle\lim_{z\to\tau}f'(z)=\alpha\in
(0,1]$. The map $f$ is called {\sl hyperbolic} if $\alpha<1$, and {\sl parabolic} otherwise. In both of these cases, 
one can choose  $\psi(z)=z+i$, and $\bigcup_{n\leq 0}(h(\D)-in)$ equal to
either a strip $\{w\in \C: 0<\Re w<a\}$ for some $a>0$, or $\Ha:=\{w\in \C:
\Re w>0\}$, or $-\Ha$ or $\C$. The set $\bigcup_{n\leq 0}(h(\D)-in)$ is a
strip if and only if $f$ is hyperbolic. The parabolic case breaks into two subcases.
If  $\bigcup_{n\leq 0}(h(\D)-in)=\Ha$ (or $-\Ha$), then $f$ is called {\sl parabolic of positive hyperbolic step}. If $\bigcup_{n\leq 0}(h(\D)-in)=\C$, then $f$ is called  {\sl parabolic of zero hyperbolic step}.

If $f$ is univalent ({\sl i.e.}, holomorphic and injective), the holomorphic function $h$ which realizes the previous linearization model can be chosen to be univalent as well and is then unique up to post-composition with affine transformations.
In this case, the map  $h$ is called the {\sl Koenigs function of} $f$ and its
image domain $\Omega:=h(\D)$ the {\sl Koenigs domain of} $f$.  Every other
linearization of $f$ factorizes through $h$. Since $f=h^{-1}\circ \psi \circ
h$ and $\psi$ is affine, it is thus the geometry of the  Koenigs
 domain which encapsulates the  dynamical properties of the map $f$. In
 particular, it determines the way the iterates of $f$  converge to the
 Denjoy--Wolff point of $f$.
 
One of the mains aim of this paper is to introduce a technique, that we call {\sl semigroup-fication} of an univalent self-map $f$, which allows to define a continuous semigroup of holomorphic self-maps of $\D$ which is, in some sense, very close to $f$. We briefly describe the semigroup-fication technique in case $f$ is non-elliptic. If $f:\D \to \D$ is univalent with no fixed points in $\D$ and $h$ is its Koenigs function, then the Koenigs domain $\Omega:=h(\D)$ of $f$ is  \textit{asymptotically starlike at infinity}, that is, $\Omega+i\subseteq \Omega$ and $\bigcup_{n\in \N}(\Omega-in)$ is either a strip,  a half-plane or $\C$. We   define the set $\Omega^\ast\subseteq \Omega$ by considering the union of all $z\in \Omega$ such that $z+it\in \Omega$ for all $t>0$. Hence, by construction,  $\Omega^*$ is  \textit{starlike at infinity}, so 
we call $\Omega^\ast$ the {\sl starlike-fication of $\Omega$} (see
Section~\ref{Sec:star}). It is not  difficult to show that  $\Omega^\ast$ is a
non-empty, open, connected and simply connected set  (Lemma \ref{Lem:starlike-fication}). Hence, by the Riemann mapping theorem, there is  a Riemann map  $h^\ast:\D \to \Omega^\ast$, and setting  $\phi_t(z):=(h^\ast)^{-1}(h^\ast(z)+it)$, we obtain a continuous semigroup $(\phi_t)$ of holomorphic self-maps of $\D$ ({\sl i.e.}, the flow of a semicomplete holomorphic vector field in $\D$). A similar construction can be done in case $f$ is elliptic, using the invariance of the Koenigs domain under the map $z\mapsto \lambda z$. The semigroup $(\phi_t)$ is the {\sl semigroup-fication} of $f$. 

As a matter of notation, if $f:\D\to \D$ is holomorphic and $A\geq 1$, we let 
\[
\hbox{Fix}_A(f):=\{\sigma\in \partial \D: \angle\lim_{z\to \sigma}f(z)=\sigma, \angle\lim_{z\to \sigma}f'(z)=A\},
\]
be the set of {\sl boundary regular fixed points} of $f$.

The main result about semigroup-fication is the following:

\begin{theorem}\label{Thm:semigroupfication-intro}
Let $f:\D \to \D$ be univalent and let $(\phi_t)$ be the  semigroup-fication of $f$. Then
\begin{enumerate}
\item  $f$ is of the same type (elliptic, hyperbolic, parabolic of positive hyperbolic step or parabolic of zero hyperbolic step) as $\phi_1$. 
\item If $f$--and hence $\phi_1$---is elliptic, then $f(z_0)=z_0$ and $\phi_1(z_0)=z_0$ for some $z_0\in \D$ and $f'(z_0)=\phi_1'(z_0)$, 
\item If $f$--and hence $\phi_1$---is non-elliptic, with dilation coefficient $\alpha\in (0,1]$ at its Denjoy-Wolff point, then $\phi_1$ has dilation coefficient $\alpha\in (0,1]$ at its Denjoy-Wolff point.
\item for every $A\geq 1$ there is a one-to-one correspondence between
  $\hbox{\rm Fix}_A(f)$ and $\hbox{\rm Fix}_A(\phi_1)$.
\item If $f$---and hence $\phi_1$---is non-elliptic, and  $\{n_k\}\subset\N$ is an increasing sequence converging to $\infty$, then the following are equivalent:
\begin{itemize}
\item[a)] $\{f^{\circ n_k}(z)\}$ converges non-tangentially to the Denjoy-Wolff point of $f$ for some---and hence any---$z\in \D$,
\item[b)] $\{\phi_{n_k}(z)\}$ converges non-tangentially to the Denjoy-Wolff point of $(\phi_t)$  for some---and hence any---$z\in \D$.
\end{itemize}
\end{enumerate}
\end{theorem}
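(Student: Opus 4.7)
The proof plan is to reduce everything to a careful comparison of the Koenigs domain $\Omega:=h(\D)$ of $f$ with its starlike-fication $\Omega^*\subseteq \Omega$, which by construction is the Koenigs domain of the semigroup $(\phi_t)$. All five claims then amount to the fact that the operation $\Omega\mapsto\Omega^*$ preserves precisely those asymptotic and boundary features of $\Omega$ that govern the dynamics of the time-one map $\phi_1$.

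\textbf{Claims (1)--(3).} In the non-elliptic case the type of $f$ and its dilation $\alpha$ at the Denjoy--Wolff point are determined by $U(\Omega):=\bigcup_{t\geq 0}(\Omega-it)$, which is either a vertical strip, a half-plane, or all of $\C$. I would first establish the identity $U(\Omega)=U(\Omega^*)$. The nontrivial inclusion $U(\Omega)\subseteq U(\Omega^*)$ is immediate from asymptotic starlikeness: if $w+it\in\Omega$ with $t\geq 0$, then $\Omega+is\subseteq\Omega$ for every $s\geq 0$ forces $(w+it)+is\in\Omega$ for all $s\geq 0$, so $w+it\in\Omega^*$ and $w\in\Omega^*-it$. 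From $U(\Omega)=U(\Omega^*)$ one reads off simultaneously the type and, in the hyperbolic case, the strip width that determines $\alpha=e^{-\pi/a}$; this yields (1) and (3). In the elliptic case the analogous invariant is the expanding union $\bigcup_{n\leq 0}\lambda^n\Omega$, treated by the identical argument. Claim (2) is immediate from the definition of $\phi_t$: differentiating $h^*\circ\phi_1=\lambda h^*$ at the interior fixed point $z_0^*:=(h^*)^{-1}(0)$ gives $\phi_1'(z_0^*)=\lambda=f'(z_0)$.

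\textbf{Claim (4).} Here the plan is to encode the boundary regular fixed points of $f$ and of $\phi_1$ as a common set of geometric features of $\Omega$, namely \emph{maximal vertical channels} inside $\Omega$ running to $\infty$. For non-elliptic univalent $f$, a BRFP $\sigma$ with multiplier $A\geq 1$ corresponds, through the prime-end extension of $h$, to a prime end of $\Omega$ at infinity admitting a maximal vertical strip-like neighborhood whose asymptotic width encodes $A$. Every point in such a channel automatically satisfies the starlike-at-infinity condition, so the channel is equally a channel of $\Omega^*$; this produces a map $\mathrm{Fix}_A(f)\to\mathrm{Fix}_A(\phi_1)$ preserving the multiplier. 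The reverse direction, from a channel of $\Omega^*$ to one of $\Omega$, uses the inclusion $\Omega^*\subseteq\Omega$ together with the localization results of the earlier sections to identify the corresponding prime end uniquely.

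\textbf{Claim (5), the main obstacle.} After conjugation by $h$ the orbit $\{f^{\circ n_k}(z)\}$ corresponds to the vertical sequence $\{h(z)+in_k\}$ inside $\Omega$, and similarly $\{\phi_{n_k}(z)\}$ corresponds to $\{h^*(z)+in_k\}$ inside $\Omega^*$. Non-tangential convergence to the Denjoy--Wolff point of $\D$ translates, through the Gromov-hyperbolic framework developed earlier in the paper, into a uniform bound on the Kobayashi distance of such a vertical sequence from a fixed vertical geodesic ray in $\Omega$, respectively $\Omega^*$. The crux is therefore a comparison statement asserting that the inclusion $\Omega^*\hookrightarrow\Omega$ is a rough isometry along vertical rays escaping to $\infty$, so that a vertical sequence escapes within bounded Kobayashi distance of the reference geodesic in $\Omega$ if and only if it does so in $\Omega^*$. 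This rough isometry is the hardest step: I would prove it by localizing near the relevant prime end, comparing the boundary geometry of $\Omega$ and $\Omega^*$ there (where $\Omega^*$ is obtained from $\Omega$ by removing only the downward shadows of the non-vertical parts of the boundary), and then using Gromov hyperbolicity to lift local geometric comparability into a global additive Kobayashi estimate sufficient to transport non-tangential slope data between $f$ and $\phi_1$.
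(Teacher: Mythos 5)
Your overall strategy --- reduce all five claims to a comparison of the Koenigs domain $\Omega$ with its starlike-fication $\Omega^*$, transfer type/dilation data through the identity $\bigcup_n(\Omega-in)=\bigcup_{t\geq0}(\Omega^*-it)$, identify boundary regular fixed points with maximal vertical strips (or spirallike sectors) common to $\Omega$ and $\Omega^*$, and handle (5) by a Gromov-hyperbolic comparison of a common escaping quasi-geodesic in $\Omega$ and $\Omega^*$ --- is precisely the route the paper takes. However, there is one genuine flaw in the execution of (1), and (5) is described too loosely at the one step where care is actually required.

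The flaw: in proving $U(\Omega)\subseteq U(\Omega^*)$ you write that "$\Omega+is\subseteq\Omega$ for every $s\geq 0$" follows from asymptotic starlikeness, and you deduce directly that $w+it\in\Omega$ forces $(w+it)+is\in\Omega$ for all $s\geq0$. This is false: asymptotic starlikeness gives only $\Omega+i\subseteq\Omega$, hence invariance under \emph{integer} vertical translates, and a Koenigs domain of a univalent map that is not a time-one map of a semigroup is exactly a domain that fails $\Omega+is\subseteq\Omega$ for all $s\geq0$ --- if that held, $\Omega=\Omega^*$ already and the whole construction is vacuous. For a concrete failure: take the half-plane $\{\Re w>0\}$ with horizontal slits $\{x\geq 1,\ y=-n\}$ removed for every $n\in\N$; this is simply connected, satisfies (1) and (2) of Definition~\ref{Def:asy-star}, yet the point $3/2$ is removed while $3/2+i/2$ is not, so $\Omega+i/2\not\subseteq\Omega$. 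The correct argument (and the one used in Lemma~\ref{Lem:starlike-fication}) is the compactness statement of Remark~\ref{compact-in-Omega} together with Lemma~\ref{Lem:tau-finite}: for each $z\in\Omega$ one has $\tau_z<+\infty$, hence $z+it\in\Omega^*$ for all $t>\tau_z$, which yields $U(\Omega)\subseteq\bigcup_{t\geq0}(\Omega^*-it)$ without any real-translation invariance of $\Omega$.

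For (5), the plan is right in spirit but two details matter. First, the curve you should use is not a vertical ray but the adapted curve $\sigma(t)=\frac{1}{2}\bigl(\delta^+_{\Omega^*,0}(t)-\delta^-_{\Omega^*,0}(t)\bigr)+it$ of \eqref{eq:sigma}; this is the curve shown to be a quasi-geodesic simultaneously in $\Omega^*$ (from \cite{BCDGZ}) and in $\Omega$ (Theorem~\ref{Thm:sigma-qg}, which uses Theorem~\ref{Thm:sigma-qg_gen}, the localization Propositions~\ref{Prop:loc-1}--\ref{Prop:loc-2}, and Theorem~\ref{Thm:loc-inf}), and it is with respect to this curve that Proposition~\ref{non-tg-sector-hyp} translates non-tangential convergence into a bounded-distance condition. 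Second, the orbit of $f$ corresponds to $\{h(z)+in_k\}$ while the orbit of $\phi_1$ corresponds to $\{h^*(z)+in_k\}$; these are different vertical sequences because $h(z)\neq h^*(z)$ in general, so you cannot compare them directly. You must first use the standard fact that the type of convergence is independent of the starting point (as in part (i) of Lemma~\ref{Lem:premain}), which allows replacing both orbits by the single vertical sequence $w_k=i+in_k$, to which Theorem~\ref{Thm:same-conv} applies. Finally, the quasi-geodesic argument only runs when $\Omega$ is of parabolic type; in the hyperbolic case both orbits converge non-tangentially for elementary reasons and that case should be treated separately (as in part (iii) of the paper's Lemma~\ref{Lem:premain}).
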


In case $f$ is hyperbolic (but not necessarily univalent), it is known (see
\cite{Va, Co}) that $\{f^{\circ n}(z)\}$ converges non-tangentially to the
Wolff-Denjoy point $\tau$ of $f$  for all $z\in \D$.
If  $f$ is parabolic of positive hyperbolic step, $\{f^{\circ n}(z)\}$
converges tangentially to $\tau$ (see \cite{Po}). If $f$ is parabolic of zero hyperbolic step, then simple examples show that both types of convergence can occur.

As an application of our semigroup-fication technique, in this paper we completely characterize the way $\{f^{\circ n}(z)\}$ converges  to the
Denjoy-Wolff point  in terms of the Euclidean shape of the Koenigs domain of $f$ 
in the case of univalent self-maps $f$ of $\D$. 

In particular, we show that the non-tangential convergence of 
 $\{f^{\circ n}(z)\}$ is equivalent to a simple geometric condition on the Koenigs domain  when $f$ is univalent parabolic of zero hyperbolic step. Our argument also leads to another proof of the tangential convergence in case of univalent parabolic maps with positive hyperbolic step (\cite{Po},  \cite{BaPo}) and of the non-tangential convergence in case of hyperbolic univalent  maps (\cite{Va}).

In order to state our result, we introduce a notation: if $\Omega\subsetneq \C$ is a domain, for  $p\in \C$ and $t>0$, we let
\[
\tilde\delta_{\Omega,p}^+(t):=\inf\{|z-(p+it)|: \Re z\geq \Re p, z\in \C\setminus \Omega\},
\] 
\[
\tilde\delta_{\Omega,p}^-(t):=\inf\{|z-(p+it)|: \Re z\leq \Re p, z\in \C\setminus \Omega\}.
\] 
We also let $\delta_{\Omega,p}^\pm(t):=\min\{t, \tilde\delta_{\Omega,p}^\pm(t)\}$.

\begin{theorem}\label{Thm:main}
Let $f:\D \to \D$ be univalent without fixed points in $\D$ and  Denjoy-Wolff point $\tau\in\partial \D$, and let $\Omega$ be the Koenigs domain of $f$. Suppose that $\{n_k\}\subset \N:=\{0,1,2,\ldots\}$ is an increasing sequence converging to $+\infty$. Then the following are equivalent:
\begin{enumerate}
\item For some---and hence any---$z\in \D$, $\{f^{\circ n_k}(z)\}$ converges non-tangentially to $\tau$.
\item For some---and hence any---$p\in \Omega$ 
\[
0<\liminf_{k\to+\infty}\frac{\delta^+_{\Omega, p}(n_k)}{\delta^-_{\Omega, p}(n_k)}\leq \limsup_{k\to+\infty}\frac{\delta^+_{\Omega, p}(n_k)}{\delta^-_{\Omega, p}(n_k)}<+\infty.
\]
\end{enumerate}
\end{theorem}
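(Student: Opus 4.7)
My plan is to reduce Theorem~\ref{Thm:main} to a dynamical equivalence for the semigroup-fication $(\phi_t)$ of $f$, combined with a geometric comparison between the Koenigs domain $\Omega$ and its starlike-fication $\Omega^*$. First I would apply Theorem~\ref{Thm:semigroupfication-intro}(5) to replace $\{f^{\circ n_k}(z)\}$ by $\{\phi_{n_k}(z)\}$: condition (1) holds for $f$ if and only if $\{\phi_{n_k}(z)\}$ converges non-tangentially to the Denjoy--Wolff point of $(\phi_t)$. After this reduction it suffices to prove (a) non-tangential convergence of $\{\phi_{n_k}\}$ is characterised by the analogue of condition~(2) written for $\Omega^*$ in place of $\Omega$, and (b) the ratio condition for $\Omega$ in~(2) is equivalent to the corresponding ratio condition for $\Omega^*$.

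For~(a), since $\Omega^*$ is starlike at infinity and $h^*$ is the Koenigs function of $(\phi_t)$, each orbit is a vertical half-line in Koenigs coordinates: $\phi_t(z)=(h^*)^{-1}(p+it)$ with $p:=h^*(z)$. Non-tangential convergence of $\phi_{n_k}(z)$ to $\tau$ translates, via standard boundary estimates for the conformal map $(h^*)^{-1}$ onto $\Omega^*$, into a balanced-width condition on horizontal Euclidean distances from $p+in_k$ to the boundary of $\Omega^*$, truncated by the height $n_k$. A bounded and bounded-away-from-zero ratio $\delta^+_{\Omega^*,p}(n_k)/\delta^-_{\Omega^*,p}(n_k)$ is then exactly the statement that $\phi_{n_k}(z)$ stays in a non-tangential cone at $\tau$; this is essentially a repackaging of known slope and shape results for semigroups.

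For~(b), I would fix $p\in\Omega^*$. The inclusion $\Omega^*\subseteq\Omega$ gives the one-sided comparison $\tilde\delta^\pm_{\Omega^*,p}(t)\le\tilde\delta^\pm_{\Omega,p}(t)$ for free. For the reverse direction, the decisive observation is that $\Omega\setminus\Omega^*$ consists of ``pockets'' of $\Omega$ capped from above by $\C\setminus\Omega$: a point $w\in\Omega\setminus\Omega^*$ is characterised by $w+is\notin\Omega$ for some $s>0$, and these caps stabilise vertically because $\Omega+i\subseteq\Omega$. Thus a minimiser of the infimum defining $\tilde\delta^\pm_{\Omega^*,p}(t)$ that happens to lie in $\Omega\setminus\Omega^*$ sits directly below, at bounded vertical distance, a boundary point of $\Omega$ lying on the same side of the line $\Re z=\Re p$; after the truncation by $\min\{\cdot,t\}$, this yields $\delta^\pm_{\Omega,p}(t)\asymp\delta^\pm_{\Omega^*,p}(t)$ with constants independent of~$t$, hence the equivalence of the two ratio conditions. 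Independence of the base point $p$ in~(2) then follows from a triangle-type estimate using once more that $\Omega+i\subseteq\Omega$.

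The hard part will be the vertical-cap control in~(b): a priori a pocket of $\Omega$ can approach the vertical line $\Re z=\Re p$ arbitrarily closely, so the claim that any ``offending'' minimiser is capped at uniformly bounded height above by $\C\setminus\Omega$ on the correct side must be argued carefully. I expect this step to draw on the localisation and Gromov-hyperbolicity tools developed for the semigroup-fication construction, applied to horizontal cross-sections of $\Omega$ near the orbit. Once this comparison is in place, steps~(a) and~(b) combine to give (1)$\Leftrightarrow$(2) as stated.
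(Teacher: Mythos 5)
Your strategy coincides with the paper's: reduce via Theorem~\ref{Thm:semigroupfication-intro}(5) to the semigroup-fication $(\phi_t)$, compare $\delta^\pm_{\Omega,p}$ with $\delta^\pm_{\Omega^\ast,p}$, and invoke \cite[Theorem 1.1]{BCDGZ} for semigroups. The one misjudgment is your assessment that the $\delta^\pm$-comparison (your step~(b)) is ``the hard part'' and will likely require Gromov hyperbolicity or localization techniques. It is in fact elementary and is precisely Lemma~\ref{Lema:dist-tau}(3) together with Lemma~\ref{Lem:curve-in}: if $w\in\Omega\setminus\Omega^\ast$, so $\tau_w>0$, then there exists $q\in\C\setminus\Omega$ with $\Re q=\Re w$ and $|w-q|\le\tfrac12$, because otherwise the vertical segment through $w$ of half-length exceeding $\tfrac12$ would lie in $\Omega$ and its upward integer shifts would overlap (using $\Omega+i\subseteq\Omega$), giving $\tau_w<0$. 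This yields the uniform additive bound $\delta^\pm_{\Omega^\ast,p}(t)\le\delta^\pm_{\Omega,p}(t)\le\delta^\pm_{\Omega^\ast,p}(t)+\tfrac12$. Note this is an additive, not multiplicative, comparison; the ratio conditions for $\Omega$ and $\Omega^\ast$ are nevertheless equivalent because in the parabolic case at least one of $\delta^\pm_{\Omega^\ast,p}(t)$ tends to $+\infty$ (last line of Lemma~\ref{Lem:curve-in}). The Gromov-hyperbolicity machinery is used not here but in Theorem~\ref{Thm:sigma-qg_gen}, to show that the curve $\sigma$ is a quasi-geodesic in $\Omega$ as well as in $\Omega^\ast$, which is what underlies Theorem~\ref{Thm:semigroupfication-intro}(5). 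Finally, dispatch the hyperbolic case separately, where both sides of the equivalence hold unconditionally; independence of the base point $p$ is Lemma~\ref{Lem:same-delta}.
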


The previous result was proven by the first named author together with M. Contreras, S. D\'iaz-Madrigal, H. Gaussier and A. Zimmer in case $f$ is the time one flow of a semicomplete holomorphic vector field of the unit disc (see \cite[Theorem 1.1]{BCDGZ}, see also \cite[Chapter 17]{BCDbook}). Since time one flows are univalent, but in general univalent self-maps of the unit disc can not be embedded into a flow of a semicomplete holomorphic vector field, Theorem~\ref{Thm:main} generalizes \cite[Theorem 1.1]{BCDGZ}. 

Theorem~\ref{Thm:main} follows from the semigroup-fication technique which, thanks to Theorem~\ref{Thm:semigroupfication-intro}.(5) allows to reduce  Theorem~\ref{Thm:main} to the corresponding result \cite[Theorem 1.1]{BCDGZ} for flows of semicomplete holomorphic vector fields (see Section~\ref{Sec:proof} for details). 

The proofs of our results are mainly based on localization results for the
hyperbolic metric and the hyperbolic distance and, in particular,  on Gromov's
hyperbolicity theory. In  order to make the paper self-contained, we recall
and partly prove the results of hyperbolicity theory we need in
Section~\ref{Sec:hyperbolicity}. In Section \ref{sec:canonical} and Section
\ref{Sec:semigroup} we briefly review the relevant facts about canonical
models for iteration and semigroups of holomorphic self-maps. In Section \ref{Sec:BRFP} we discuss  boundary regular fixed points and their characterization through models. In Section~\ref{Sec:star} we
describe the construction and the properties of the starlike-fication
$\Omega^*$ of a domain $\Omega$ which is  asymptotically starlike
at infinity.
The basic technical result of the paper is proved in
Section~\ref{Sec:Gromov}. It asserts that if $\Omega$ is  an asymptotically
starlike domain of parabolic type which is Gromov hyperbolic (but not
necessarily simply connected), then there exists a Lipschitz curve in its
starlike-fication $\Omega^\ast$ ``escaping  to $\infty$'' which is a quasi-geodesic  both in $\Omega$ and $\Omega^\ast$ (see Theorem~\ref{Thm:sigma-qg}). 
Finally, the proof of Theorem~\ref{Thm:semigroupfication-intro} and the proof
of Theorem~\ref{Thm:main} are given  in Section~\ref{Sec:proof}.

\section{Right and left distance of a domain}

For a given set $\Omega\subsetneq \C$, we let
\[
\delta_\Omega(z):=\inf\{|z-p|: p\in \C\setminus\Omega\} \, , \qquad z \in \Omega.
\]
For  $p\in \C$ and $t>0$, we define
\[
\tilde\delta_{\Omega,p}^+(t):=\inf\{|z-(p+it)|: \Re z\geq \Re p, z\in \C\setminus \Omega\},
\] 
\[
\tilde\delta_{\Omega,p}^-(t):=\inf\{|z-(p+it)|: \Re z\leq \Re p, z\in \C\setminus \Omega\}.
\] 
Note that $\tilde\delta^+_{\Omega, p}(t)=\delta_{\Omega^+}(p+it)$, where $\Omega^+=\Omega\cup\{w\in \C: \Re w<\Re p\}$.

Note also that if $p+it\in \C\setminus \Omega$, then $\tilde\delta_{\Omega,p}^+(t)=\tilde\delta_{\Omega,p}^-(t)=0$, while for $p+it\in \Omega$, 
\[
\delta_\Omega(p+it)=\min\{\tilde\delta_{\Omega,p}^+(t), \tilde\delta_{\Omega,p}^-(t)\}.
\] 

Moreover, for $t>0$ we let
\[
\delta^+_{\Omega, p}(t):=\min\{\tilde\delta_{\Omega,p}^+(t), t\},\quad \delta^-_{\Omega,p}(t):=\min\{\tilde\delta_{\Omega,p}^-(t), t\}.
\]
If $\Omega$ is starlike at infinity ({\sl i.e.}, $\Omega+it\subseteq \Omega$ for all $t\geq 0$), then $(0,+\infty)\ni t\mapsto \delta_{\Omega,p}^{\pm}(t)$ is non-decreasing. 

Simple geometric considerations allow to prove the following lemma:

\begin{lemma}\label{Lem:same-delta}
Let $\Omega\subsetneq \C$ be a   domain such that for every $z\in \Omega$ there exists $t_z\in \R$ such that $z+it\in \Omega$ for all $t>t_z$. Then for all $p, q\in \Omega$ there exist $0<c<C$  such that for all $t>0$ 
\[
c \delta^{\pm}_{\Omega,p}(t)\leq \delta^{\pm}_{\Omega,q}(t)\leq C\delta^{\pm}_{\Omega,p}(t).
\]
\end{lemma}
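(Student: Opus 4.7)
I would approach this via an additive Lipschitz estimate that is then upgraded to the desired multiplicative comparison; throughout, I focus on $\delta^+$ since $\delta^-$ follows by the reflection $\Re z \mapsto -\Re z$. The starting point is the special case $\Re p = \Re q$. Here the closed set $\{z \in \C \setminus \Omega : \Re z \geq \Re p\}$ over which the infimum defining $\tilde\delta^+_{\Omega,p}(t)$ is taken coincides with the analogous set for $q$; since the Euclidean distance from a fixed closed set is a $1$-Lipschitz function of its argument, I obtain
\[
|\tilde\delta^+_{\Omega,p}(t) - \tilde\delta^+_{\Omega,q}(t)| \leq |p-q|,
\]
and since $x \mapsto \min(x,t)$ is itself $1$-Lipschitz the same bound passes to $\delta^+$.

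To handle the general case $\Re p \neq \Re q$, I would interpolate through the auxiliary point $p' := \Re q + i\,\Im p$: the pair $(p',q)$ has equal real part (handled as above), while the pair $(p,p')$ has equal imaginary part but different real parts. For the latter, the triangle inequality combined with monotonicity of the infimum under inclusion of the admissible half-planes yields the one-sided bound $\tilde\delta^+_{\Omega,p}(t) \leq \tilde\delta^+_{\Omega,p'}(t) + |p-p'|$. The reverse inequality is delicate: shrinking the admissible half-plane from $\{\Re z \geq \Re p\}$ to $\{\Re z \geq \Re p'\}$ may a priori cause the infimum to jump (even to $+\infty$). However, the saturation $\delta^+_{\Omega,\cdot}(t) \leq t$ together with the hypothesis that each vertical ray from a point of $\Omega$ eventually remains in $\Omega$ confines such jumps to a bounded range of $t$; carrying this through recovers an additive estimate $|\delta^+_{\Omega,p}(t) - \delta^+_{\Omega,q}(t)| \leq M$ for some $M = M(p,q)$.

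Finally, I would convert this additive estimate into the claimed multiplicative one by splitting the range of $t$ into three regimes. For $t \leq \tfrac{1}{2}\min\{\delta_\Omega(p), \delta_\Omega(q)\}$ both $\delta^+$ equal $t$ outright, so the ratio equals $1$. For $t$ in any compact subinterval away from this small-$t$ range, both $\delta^+$ are continuous and strictly positive (again by the hypothesis), hence the ratio is pinned to a compact subinterval of $(0,+\infty)$ by continuity and compactness. For $t$ large, the hypothesis forces $\delta^+_{\Omega,p}(t)$ to grow, so the additive error $M$ is negligible in the ratio, which tends to $1$. The main obstacle I anticipate is the second stage: the admissible half-planes $\{\Re z \geq \Re r\}$ genuinely depend on $\Re r$, and the asymptotic verticality hypothesis does real work to prevent the shrinkage of this half-plane from producing an uncontrolled jump in $\tilde\delta^+$ on top of $t$.
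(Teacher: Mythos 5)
The main step that fails is your multiplicative upgrade in the large-$t$ regime. You claim ``the hypothesis forces $\delta^+_{\Omega,p}(t)$ to grow,'' but it does not. Take $\Omega=\{z:0<\Re z<a\}$, a vertical strip (which is exactly the Koenigs domain of a hyperbolic univalent self-map and clearly satisfies the hypothesis). Then $\tilde\delta^+_{\Omega,p}(t)=a-\Re p$ for every $t$, so $\delta^+_{\Omega,p}(t)=\min\{t,\,a-\Re p\}$ stabilizes at the constant $a-\Re p$ once $t\ge a-\Re p$ and never grows. Thus the additive error $M$ is \emph{not} negligible in the ratio for large $t$, and an estimate $|\delta^+_{\Omega,p}-\delta^+_{\Omega,q}|\le M$, even if it were true, would not give the multiplicative bound; your trichotomy collapses at precisely the range of $t$ that matters. (For the strip the conclusion happens to hold because both $\delta^+$ stabilize to \emph{nonzero} constants, but that needs a separate direct argument your scheme does not supply.) There is a second soft spot in the middle regime: strict positivity of $\delta^+_{\Omega,p}(t)$ is only guaranteed by the hypothesis for $t>t_p$, not for all $t>0$; if $p+it_0\notin\Omega$ for some $t_0>0$ while $q+it_0\in\Omega$ has empty complement to its right, then $\delta^+_{\Omega,p}(t_0)=0$ while $\delta^+_{\Omega,q}(t_0)>0$.

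The deeper issue is that the asserted additive estimate in the mixed case does not follow from the stated hypothesis, and in fact the lemma as worded appears to be false without extra structure, which explains why an argument that uses only the stated hypothesis cannot close. Consider the connected open set $\Omega=\C\setminus\bigcup_{n\ge 1}\bigl(\{1/n\}\times[n,n+1]\bigr)$: every vertical ray from a point of $\Omega$ meets at most one removed segment and hence eventually lies in $\Omega$, so the hypothesis holds; taking $p=-1$ and $q=1$, for large integer $N$ one has $\tilde\delta^+_{\Omega,p}(N)=1+1/N$ (witnessed by the $N$-th segment), while $\tilde\delta^+_{\Omega,q}(N)=N-2$ (only the first segment has real part $\ge 1$), so $\delta^+_{\Omega,q}(N)/\delta^+_{\Omega,p}(N)\to\infty$ and no constant $C$ exists. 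The intended statement presumably needs additional hypotheses such as $\Omega+i\subseteq\Omega$ together with simple connectivity, as hold for Koenigs domains; both of my counterexamples violate $\Omega+i\subseteq\Omega$. Any correct proof must exploit that extra rigidity, which your proposal never invokes, so the argument cannot be completed along the lines you describe.
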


\section{Hyperbolic geometry}\label{Sec:hyperbolicity}

In this section we recall the notions and results of hyperbolic geometry we need in the paper. We refer the reader to \cite{Abate, Buck, BCDbook, Koba, Ghys} for details.

Let $D\subset \C$  be a domain, $z\in D$ and $v\in \C$. The {\sl hyperbolic norm} of $v$ at $z$ in $D$ is
\[
\kappa_D(z;v):=\inf\left\{\frac{|v|}{|f'(0)|} \, \bigg| \,  f:\D \to D \hbox{\ is holomorphic}, f(0)=z\right\}.
\]
By Schwarz's Lemma it follows immediately that if $D\subsetneq \C$ is a simply connected domain then for all $z\in D$ and $v\in \C$ we have $\kappa_D(z;v)=\frac{|v|}{f'(0)}$, where $f:\D\to D$ is the Riemann map such that $f(0)=z$ and $f'(0)>0$. 

The {\sl hyperbolic distance} between $z,w\in D$ is defined as 
\[
k_D(z,w):=\inf \int_0^1 \kappa_D(\gamma(\tau);\gamma'(\tau))d\tau,
\]
where the infimum is taken over all piecewise $C^1$-smooth curves $\gamma:[0,1]\to D$ such that $\gamma(0)=z$ and $\gamma(1)=w$. 

As a consequence of Schwarz's Lemma, every biholomorphism between two domains is an isometry for the hyperbolic norm and distance, while every holomorphic function does not expand   the hyperbolic norm and distance. 

It follows from the uniformization theorem  that $k_D(z,w)=0$ for some---and hence for all---$z, w\in D$ if and only if $D=\C$ or $D=\C\setminus\{p\}$ for some $p\in \C$. In all other cases,  namely,  if $D$ is {\sl  hyperbolic} ({\sl i.e.}, holomorphically covered by $\D$),  $(D, k_D)$ is a complete metric space. Note that this implies in particular that if $D\subset \C$ is a domain whose complement contains more than one point, then $\lim_{n\to \infty}k_D (z_n, z_0)=+\infty$ for all fixed $z_0\in D$ and $\{z_n\}\subset D$ such that $\{z_n\}$ lies eventually outside any compacta of $D$. 

The {\sl hyperbolic length} of an absolutely continuous curve  $\gamma:[s,t]\to D$  is  
\[
\ell_D(\gamma;[s,t])=\int_s^t \kappa_D(\gamma(\tau);\gamma'(\tau))d\tau.
\]

 An absolutely continuous curve $\eta: I\to D$ defined on an interval $I\subset \R$ is a  {\sl geodesic} if 
\[
\ell_D(\eta;[s,t])=k_D(\eta(s), \eta(t))
\]
 for all $s \le t$ belonging to $I$.
 
 Since $(D,k_D)$ is complete, it follows from the Hopf-Rinow theorem that for every $z, w\in D$ there exists a geodesic $\eta:[0,1]\to D$ in $D$ such that $\eta(0)=z$ and $\eta(1)=w$.
 Geodesics joining two different points might not be unique (up to parameterization) in general, but, as a consequence of the Riemann mapping theorem and a direct inspection in the unit disc, if $D\subsetneq \C$ is simply connected  then every two points of $D$ can be joined by a unique (up to parameterization) geodesic.

Three geodesics $\gamma_1, \gamma_2, \gamma_3:[0,1]\to D$ such that $\gamma_1(0)=\gamma_2(0)$, $\gamma_1(1)=\gamma_3(0)$ and $\gamma_2(1)=\gamma_3(1)$ form a {\sl geodesic triangle} in $D$. The sets $\gamma_j([0,1])$, $j=1,2,3$, are called the {\sl edges} of the geodesic triangle.

Given a hyperbolic domain $D\subsetneq \C$, we say that $(D, k_D)$ is {\sl Gromov hyperbolic} if there exists a constant $G>0$, called the {\sl Gromov constant} of $(D, k_D)$, such that for every geodesic triangle $\{\gamma_1, \gamma_2,\gamma_3\}$, each point of each edge  stays at hyperbolic distance no more than $G$ from the other two edges of the geodesic triangle.
This is the well known Rips thin triangle definition of Gromov hyperbolicity for geodesic metric spaces such as $(D,k_D)$.
For simplicity, we say that a hyperbolic domain $D\subsetneq \C$ is Gromov hyperbolic, if $(D,k_D)$ is
Gromov hyperbolic.

Since every simply connected domain $D\subsetneq \C$ is biholomorphic to $\D$,
hence ($D,k_D)$ is isometric to $(\D,k_{\D})$, and the unit disc  is well known
to be Gromov hyperbolic, it follows that  every simply connected domain
$D\subsetneq \C$ is Gromov hyperbolic (with the same  Gromov constant as  the unit disc).

 \begin{definition}
Let $I \subset \R$ be an interval, $D\subsetneq \C$  a hyperbolic domain and
$\gamma: I\to D$ an absolutely continuous curve. Let $A\geq 1$, $B\geq
0$. We say that $\gamma$ is a {\sl $(A,B)$-quasi-geodesic} if   for all $s\leq t$ belonging to $I$,
\begin{align*}
\ell_D(\gamma;[s,t]) \leq Ak_D(\gamma(s), \gamma(t))+B.
\end{align*}
We say that $\gamma$ is a {\sl  quasi-geodesic} if there exist $A\geq 1, B\geq 0$ such that $\gamma$ is  a  $(A,B)$-quasi-geodesic.
\end{definition}

We sometimes say that $\gamma : I \to D$ is a (quasi-)geodesic \textit{in $D$}
when we need to emphasize the ambient space $(D,k_D)$. If this ambient space is Gromov
hyperbolic, then every quasi-geodesic is ``shadowed' by a geodesic. This is
the content of Gromov's shadowing lemma. It says that for any
$A\geq 1$ and $B\geq 0$ there exists $M>0$ (which depends only on $A, B$ and
the Gromov's constant of $D$) such that if $\gamma:[a,b]\to D$ is a
$(A,B)$-quasi-geodesic, then  there exists a geodesic $\eta:[0,1]\to D$ such that $\eta(0)=\gamma(a)$, $\eta(1)=\gamma(b)$ and
for every $t\in [a,b]$, $s\in [0,1]$
\begin{equation}\label{Eq:shadows}
k_D(\gamma(t), \eta([0,1]))<M, \quad k_D(\eta(s), \gamma([a,b]))<M.
\end{equation}
See, for instance, \cite[Theorem 6.3.8]{BCDbook}) for a proof of the shadowing lemma.

In this paper, quasi-geodesics  play a significant role,
in particular since they are very useful for detecting non-tangential convergence.
The proof of the following result is based on Gromov's shadowing lemma and can
be found e.g.~in \cite[Proposition 4.5]{BCDG} (or \cite[Corollary 6.3.9]{BCDbook}).

\begin{proposition} \label{non-tg-sector-hyp}
Let $D\subsetneq \C$ be a simply connected domain and let $f:\D\to D$ be a Riemann map. Suppose  $\eta:[0,+\infty)\to D$ is a quasi-geodesic such that $\lim_{t\to+\infty}k_D(\eta(0),\eta(t))=+\infty$. Then
\begin{enumerate}
\item  there exists $\tau\in\partial \D$ such that $\lim_{t\to+\infty}f^{-1}(\eta(t))=\tau$,
\item a sequence $\{z_n\} \subset \D$ converges non-tangentially to $\tau$ if and only if there exists $C>0$ such that for all $n\in \N$,
\[
k_D(f(z_n), \eta([0,+\infty))=\inf\{k_D(f(z_n), \eta(t)): t\in [0,+\infty)\}\leq C.
\]
\end{enumerate}
\end{proposition}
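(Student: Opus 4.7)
The plan is to transfer the problem to $\D$ via the Riemann map and then exploit the shadowing lemma together with the explicit geometry of the disc. Since $f:\D\to D$ is a biholomorphism, it is an isometry for the hyperbolic distances, so setting $\tilde\eta:=f^{-1}\circ\eta$, the curve $\tilde\eta$ is an $(A,B)$-quasi-geodesic in $\D$ with the same constants. The hypothesis $k_D(\eta(0),\eta(t))\to+\infty$ becomes $k_\D(\tilde\eta(0),\tilde\eta(t))\to+\infty$, which by completeness of $(\D,k_\D)$ is equivalent to $|\tilde\eta(t)|\to 1$. It therefore suffices to prove both assertions in $\D$ for $\tilde\eta$.

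For (1), I would apply the shadowing lemma \eqref{Eq:shadows} to the truncations $\tilde\eta|_{[0,n]}$: for a uniform constant $M>0$ and each $n\in\N$, there exists a geodesic segment $\sigma_n$ in $\D$ joining $\tilde\eta(0)$ to $\tilde\eta(n)$ that $M$-shadows $\tilde\eta|_{[0,n]}$. Reparametrizing each $\sigma_n$ by hyperbolic arclength and extracting a locally uniformly convergent subsequence (Arzelà--Ascoli, using that geodesics in $\D$ are arcs of generalized circles orthogonal to $\partial\D$) would yield a geodesic ray $\sigma_\infty:[0,+\infty)\to\D$ that $M$-shadows $\tilde\eta$ on all of $[0,+\infty)$. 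Every infinite geodesic ray in $\D$ converges in the Euclidean sense to a unique boundary point $\tau\in\partial\D$; combined with $|\tilde\eta(t)|\to 1$ and the uniform bound $k_\D(\tilde\eta(t),\sigma_\infty)\leq M$, this forces $\tilde\eta(t)\to\tau$, proving (1).

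For (2), since $f$ is an isometry, the claim becomes: $\{z_n\}\subset\D$ converges non-tangentially to $\tau$ if and only if $k_\D(z_n,\tilde\eta([0,+\infty)))$ is bounded. The bounded mutual shadowing between $\tilde\eta$ and $\sigma_\infty$ further reduces this to the same statement with $\sigma_\infty$ in place of $\tilde\eta$. Conjugating by a Möbius transformation sending $\tau$ to $\infty$ and $\sigma_\infty$ to the positive imaginary axis of $\UH$, a direct computation of $k_\UH(x+iy,it)$ shows that $\inf_{t>0} k_\UH(x+iy,it)$ is bounded precisely when $|x|/y$ is bounded, i.e.\ exactly the Stolz-angle condition for non-tangential approach to $\infty$.

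The principal obstacle will be upgrading the shadowing lemma---which controls only compact intervals---to a single geodesic ray shadowing $\tilde\eta$ on all of $[0,+\infty)$, and then converting the hyperbolic proximity of $\tilde\eta$ to the endpoint of $\sigma_\infty$ into genuine Euclidean convergence to $\tau\in\partial\D$. Both steps rely on the explicit description of geodesics in $\D$ and on the fact that the Gromov boundary of $\D$ coincides with its Euclidean boundary $\partial\D$.
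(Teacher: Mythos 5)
Your argument is correct and in line with the paper, which does not reprove the proposition but defers to \cite[Proposition 4.5]{BCDG} and \cite[Corollary 6.3.9]{BCDbook}, explicitly noting that the proof rests on Gromov's shadowing lemma --- precisely the tool you employ after transporting everything to $\D$ via the Riemann isometry. The extraction of the limiting geodesic ray $\sigma_\infty$ (anchored at $\tilde{\eta}(0)$, so that Arzel\`a--Ascoli applies and the two-sided $M$-shadowing passes to the limit) and the explicit half-plane computation giving the Stolz-angle criterion are both sound.
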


If $D\subsetneq \C$ is a domain, and $z\in D$, $R>0$, we let
\[
B_D(z,R):=\{w\in D: k_D(z,w)<R\}.
\]
We  need the following localization lemma:

\begin{lemma}\label{lem:local-disc}
Let $D\subsetneq \C$ be a hyperbolic domain. Then for every $R>0$ there exists $c>1$ such that for all $z\in D$ and $v\in \C$,
\[
\kappa_D(z; v)\leq \kappa_{B_D(z,R)}(z;v) \leq c \kappa_D(z; v).
\]
\end{lemma}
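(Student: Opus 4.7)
The plan is to prove the two inequalities separately. The left inequality $\kappa_D(z;v)\leq\kappa_{B_D(z,R)}(z;v)$ is immediate from the monotonicity of the hyperbolic norm under inclusions: since $B_D(z,R)\subseteq D$, every holomorphic map $\D\to B_D(z,R)$ is in particular a holomorphic map $\D\to D$, so the infimum defining $\kappa_D(z;v)$ is taken over a strictly larger competition set than the one defining $\kappa_{B_D(z,R)}(z;v)$.

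For the right inequality I would combine the Schwarz--Pick lemma with a rescaling. First, I would identify the hyperbolic disc $B_\D(0,R)$ with an Euclidean disc centred at $0$: a short computation from the paper's definition of $k_\D$ (via the integrated form of $\kappa_\D(z;v)=|v|/(1-|z|^2)$) gives that $B_\D(0,R)=\{w\in\C:|w|<r\}$ for $r:=\tanh R\in(0,1)$. Next, for any holomorphic $f:\D\to D$ with $f(0)=z$, the contraction property of the hyperbolic distance under holomorphic maps (the Schwarz--Pick inequality, which is simply a consequence of the definition of $\kappa_D$ and of the variational definition of $k_D$) gives $k_D(f(0),f(w))\leq k_\D(0,w)$ for every $w\in\D$, and therefore $f(B_\D(0,R))\subseteq B_D(z,R)$.

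Then I would set $g(\zeta):=f(r\zeta)$. This defines a holomorphic map $g:\D\to B_D(z,R)$ with $g(0)=z$ and $g'(0)=rf'(0)$. Plugging $g$ into the definition of the hyperbolic norm of $B_D(z,R)$ gives
\[
\kappa_{B_D(z,R)}(z;v)\leq \frac{|v|}{|g'(0)|}=\frac{1}{r}\cdot\frac{|v|}{|f'(0)|},
\]
and taking the infimum over all admissible holomorphic $f:\D\to D$ with $f(0)=z$ on the right-hand side yields $\kappa_{B_D(z,R)}(z;v)\leq \coth R\cdot\kappa_D(z;v)$, so the constant $c:=\coth R$ works, and $c>1$ since $\tanh R<1$ for every $R>0$.

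I do not foresee any substantive obstacle: the statement is essentially just a rescaled version of the Schwarz--Pick lemma. It is worth noting that the argument does \emph{not} require $D$ to be simply connected nor $f$ to be injective, because the competition set in the definition of $\kappa_D$ consists of arbitrary holomorphic discs through $z$, and the distance-decreasing property of holomorphic maps for the hyperbolic metric holds in full generality on hyperbolic domains.
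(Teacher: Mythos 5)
Your proof is correct and follows essentially the same route as the paper's: both reduce to the Schwarz--Pick contraction to show that any holomorphic $f:\D\to D$ with $f(0)=z$ maps $B_\D(0,R)$ into $B_D(z,R)$, and then use this restriction (your rescaling $g(\zeta)=f(r\zeta)$ with $r=\tanh R$ is precisely the step that yields $\kappa_{B_D(z,R)}(z;1)\le\kappa_{B_\D(0,R)}(0;1)/|f'(0)|$, after which one takes the infimum over $f$). As a minor bonus, your explicit $c=\coth R$ is the correct sharp constant, since $B_\D(0,R)$ has euclidean radius $\tanh R$; the paper's side remark that one can take $c=\cosh(R)$ appears to be a slip for $\coth R$.
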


One can take $c=\cosh(R)$, so $1/c$ is the \textit{euclidean} radius of $B_{\D}(0,R)$, the disc in $\D$ centered at the origin of hyperbolic radius $R$.
\begin{proof}
Since $B_D(z,R)\subset D$ and $B_D(z,R)\ni z\mapsto z\in D$ is holomorphic, the first estimate follows  from the non-increasing property of the hyperbolic norm under holomorphic maps. 

In order to prove the second estimate, it is clear from the definition of hyperbolic norm that it is enough to consider $v=1$.
Let $c:=\kappa_{B_{\D}(0,R)}(0;1)$. Fix $z \in D$ and let $f : \D \to D$ be a holomorphic function such that $f(0)=z$. Since $f : (\D,k_{\D}) \to (D,k_D)$ is distance non-increasing, it maps $B_{\D}(0,R)$ into $B_{D}(z,R)$  again in a nonexpanding way, so
$$ \kappa_{B_{D}(z,R)}(f(\xi);f'(\xi)) \le \kappa_{B_{\D}(0,R)}(\xi;1)\, , \qquad \xi \in B_{\D}(0,R) \, .$$
Setting $\xi=0$, we get $\kappa_{B_D(z,R)}(z;1) \le c/|f'(0)|$, which proves the second estimate.
%
%
\end{proof}

\begin{remark}\label{disc-total-geo}
Let $D\subsetneq \C$ be a simply connected domain,  $z\in D$ and $R>0$. Then $B_D(z,R)$ is {\sl totally geodesic} in $D$. Namely, for every $p,q\in B_D(z,R)$, the geodesic in $D$ joining $p$ and $q$ is contained in $B_D(z,R)$. This follows easily since $D$ is isometric to $\D$ via a Riemann map, and, since the group of automorphisms of $\D$ act transitively on $\D$, it is enough to check the statement for hyperbolic discs centered at the origin.
\end{remark}

In case $D\subset\C$ is hyperbolic but not necessarily simply connected, the previous remark can be replaced by the following

\begin{lemma}\label{Lem:disc-quasi-total}
Let $\Omega\subset\C$ be a hyperbolic domain. Let $z_0\in D$ and $T>0$. Then for every $z,w\in B_D(z_0, T)$, any  geodesic $\gamma:[0,1]\to D$ in $D$ such that $\gamma(0)=z$, $\gamma(1)=w$ satisfies $\gamma(t)\in B_D(z_0, 2T)$ for all $t\in [0,1]$.
\end{lemma}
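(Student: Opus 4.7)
The plan is a two-step triangle inequality once I exploit the additivity of hyperbolic length along a geodesic. First I would use the definition of a geodesic given in the paper: since $\ell_D(\gamma;[s,t])=k_D(\gamma(s),\gamma(t))$ for all $s\le t$ in $[0,1]$, and since $\ell_D(\gamma;[0,1])=\ell_D(\gamma;[0,t])+\ell_D(\gamma;[t,1])$ by additivity of the length integral, one immediately gets, for every $t\in[0,1]$, the subsegment identity
\[
k_D(z,\gamma(t))+k_D(\gamma(t),w)=k_D(z,w).
\]

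Next I would feed this into the triangle inequality. Since $z,w\in B_D(z_0,T)$, the triangle inequality gives $k_D(z,w)\le k_D(z,z_0)+k_D(z_0,w)<2T$, and combined with the subsegment identity this forces
\[
\min\{k_D(z,\gamma(t)),\,k_D(\gamma(t),w)\}\le \tfrac{1}{2}k_D(z,w)<T.
\]
A last triangle inequality then closes the argument: if $k_D(z,\gamma(t))<T$ then
\[
k_D(z_0,\gamma(t))\le k_D(z_0,z)+k_D(z,\gamma(t))<T+T=2T,
\]
and symmetrically when $k_D(\gamma(t),w)<T$. Either way $\gamma(t)\in B_D(z_0,2T)$.

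I do not anticipate any real obstacle: the proof is purely metric and uses neither simple connectivity nor uniqueness of geodesics, which is precisely why this lemma takes the place of Remark \ref{disc-total-geo} in the general hyperbolic setting, at the cost of enlarging the radius from $T$ to $2T$.
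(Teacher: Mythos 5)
Your proof is correct, and it is in fact cleaner than the one in the paper. Both arguments rest on the additivity of hyperbolic distance along a geodesic, but you apply it directly at an arbitrary point $\gamma(t)$, obtaining $k_D(z,\gamma(t))+k_D(\gamma(t),w)=k_D(z,w)<2T$, from which the bound $\min\{k_D(z,\gamma(t)),k_D(\gamma(t),w)\}<T$ and a single triangle inequality finish the job. The paper instead argues by contradiction: assuming $\gamma(t_0)\notin B_D(z_0,2T)$ it locates crossing times $s_0<t_0<s_1$ with $\gamma(s_0),\gamma(s_1)\in\partial B_D(z_0,T)$, invokes the fact that $k_D(\partial B_D(z_0,T),\partial B_D(z_0,2T))=T$, and applies additivity between $s_0$ and $s_1$ to reach $2T>k_D(\gamma(s_0),\gamma(s_1))\ge 2T$. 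Your route avoids both the contradiction structure and the auxiliary claim about the distance between concentric hyperbolic spheres, so it is shorter and more transparent while proving exactly the same statement.
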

\begin{proof}
Let $z,w\in B_D(z_0, T)$. By the triangle inequality,
\[
k_D(z,w)\leq k_D(z,z_0)+k_D(w,z_0)<2T.
\]
Let $\gamma:[0,1]\to D$ be a geodesic such that $\gamma(0)=z$ and $\gamma(1)=w$ and assume by contradiction that there exists $t_0\in (0,1)$ such that $\gamma(t_0)\not\in B_D(z_0, 2T)$. Thus, by continuity, we can find $0<s_0<t_0<s_1<1$ such that $\gamma(s_0), \gamma(s_1)\in \partial  B_D(z_0, T)$. Since it is clear that 
\[
k_D(\partial  B_D(z_0, T), \partial  B_D(z_0, 2T))=T,
\]
we have
\[
2T>k_D(z,w)>k_D(\gamma(s_0), \gamma(s_1))=k_D(\gamma(s_0), \gamma(t_0))+k_D(\gamma(t_0), \gamma(s_1))\geq 2T,
\]
a contradiction.
\end{proof}

\section{Canonical models for iteration}\label{sec:canonical}

Let $f:\D \to \D$ be a holomorphic map without fixed points in $\D$. The well known Denjoy-Wolff Theorem (see, {\sl e.g.}, \cite{Abate, BCDbook}) implies that there exists a unique point $\tau\in\partial \D$ such that the sequence $\{f^{\circ n}\}$ of iterates of $f$ converges uniformly on compacta to the constant map $z\mapsto \tau$. Moreover, there exists $\alpha\in (0,1]$ such that
\[
\angle\lim_{z\to \tau}f'(z)=\alpha,
\]
where $\angle\lim$ denotes the non-tangential limit. The map $f$ is called {\sl hyperbolic} if $\alpha<1$ and {\sl parabolic} if $\alpha=1$.

We state the following linearization result for univalent ({\sl i.e.}, injective and holomorphic) maps. The theorem has a long history, starting with Valiron~\cite{Va}, Pommerenke~\cite{Po}, Baker and Pommerenke~\cite{BaPo}, Cowen~\cite{Co}, and Bourdon and Shapiro~\cite{BoSha}. The statement here comes from \cite{AB}.

\begin{theorem}\label{Thm:canonical-model}
Let $f:\D \to \D$ be univalent and with no fixed points in $\D$. Then there exists $h:\D \to \C$ univalent such that
\begin{enumerate}
\item $h(f(z))=h(z)+i$ for all $z\in \D$.
\item $\bigcup_{m\in \N}(h(\D)-im)=\Lambda$, where either $\Lambda=\{w\in \C: 0<\Re w<a\}$ for some $a>0$, or $\Lambda=\Ha:=\{w\in \C: \Re w>0\}$, or $\Lambda=-\Ha$ or $\Lambda=\C$.
\item If $g:\D \to \C$ is holomorphic and $g(f(z))=g(z)+i$ for all $z\in \D$ then there exists a surjective, holomorphic map $\psi:\Lambda\to \bigcup_{n\geq 0}(g(\D)-ni)$ such that $g=\psi\circ h$ and $k(z+i)=k(z)+i$ for all $z\in \Lambda$. Moreover, if $g$ is univalent, then $\psi$ is a biholomorphism. 
\item $\lim_{n\to \infty}\frac{k_\Lambda(w, w+in)}{n}=-\frac{1}{2}\log \alpha$ for all $w\in \Lambda$.
\item $\lim_{n\to \infty}k_\D(f^{\circ n}(z), f^{\circ n}(w))=k_\Lambda(h(z), h(w))$ for all $z,w\in \D$.
\end{enumerate}
\end{theorem}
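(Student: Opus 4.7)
My approach is to build the Koenigs function $h$ via a direct limit construction tailored to the univalent setting. Consider the inductive system
\[
\D \xrightarrow{\ f\ } \D \xrightarrow{\ f\ } \D \xrightarrow{\ f\ } \cdots
\]
of copies of the disc and let $X$ denote its direct limit. Then $X$ is a one-dimensional complex manifold, simply connected since each stage is and every transition map is injective, equipped with holomorphic embeddings $\iota_n:\D\to X$ satisfying $\iota_{n+1}\circ f=\iota_n$ and whose images form an increasing exhaustion $\iota_0(\D)\subset\iota_1(\D)\subset\cdots$ of $X$. Univalence of $f$ ensures that the rule $F(\iota_n(z)):=\iota_n(f(z))$ is well defined and yields a biholomorphism $F:X\to X$ without fixed points (since $f$ has none in $\D$).

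By the uniformization theorem, $X$ is biholomorphic to either $\C$ or $\D$. A fixed-point-free automorphism of $\C$ is a translation, while a fixed-point-free automorphism of $\D$ (parabolic or hyperbolic) is conjugate in $\Aut(\D)$ to a translation on a half-plane or, via the logarithm, to a translation on a vertical strip. In each case I would pick a biholomorphism $\Psi:X\to\Lambda$ onto one of the model domains $\C$, $\pm\Ha$, or $\{0<\Re w<a\}$ for some $a>0$, chosen so that $\Psi$ conjugates $F$ to the translation $T_i:w\mapsto w+i$, and set $h:=\Psi\circ\iota_0$. The identity $F\circ\iota_0=\iota_0\circ f$ becomes $h\circ f=h+i$, giving (1), while $\Psi(\iota_m(\D))=h(\D)-im$ yields $\bigcup_m(h(\D)-im)=\Lambda$, proving (2). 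For (3), any $g:\D\to\C$ with $g\circ f=g+i$ determines a compatible family $g_n:=g-in$ satisfying $g_{n+1}\circ f=g_n$; by the universal property of the direct limit it factors through a unique holomorphic $\tilde g:X\to\C$ with $\tilde g\circ\iota_n=g-in$, and the transported map $\psi:=\tilde g\circ\Psi^{-1}:\Lambda\to\C$ satisfies $\psi(w+i)=\psi(w)+i$, $g=\psi\circ h$, and has image $\bigcup_n(g(\D)-in)$; if $g$ is univalent, then $\tilde g$, and hence $\psi$, is injective.

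For the distance assertions, the relation $h\circ f^{\circ m}=h+im$ together with the fact that $h:\D\to h(\D)$ is a biholomorphism gives
\[
k_\D(f^{\circ m}(z),f^{\circ m}(w))=k_{h(\D)-im}(h(z),h(w)).
\]
Since $(h(\D)-im)_{m\in\N}$ is an increasing exhaustion of $\Lambda$, the right-hand side decreases monotonically to $k_\Lambda(h(z),h(w))$ as $m\to\infty$, proving (5). For (4), the sequence $a_n:=k_\Lambda(w,w+in)$ is subadditive and $T_i$-invariant, so Fekete's lemma yields $\lim a_n/n=\inf_n a_n/n$, which equals the translation length of $T_i$ on $(\Lambda,k_\Lambda)$; an explicit computation gives $0$ when $\Lambda=\C$ or $\Lambda=\pm\Ha$, and $\pi/(2a)$ in the strip of width $a$. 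To identify this value with $-\tfrac12\log\alpha$, I would combine (5) with the Julia--Wolff--Carath\'eodory-type asymptotic $\lim_n k_\D(z,f^{\circ n}(z))/n=-\tfrac12\log\alpha$: the inequality $k_\Lambda\leq k_{h(\D)}$ gives the upper bound $\lim a_n/n\leq -\tfrac12\log\alpha$, and a matching lower bound is obtained by applying the same asymptotic along shifted orbits $f^{\circ m}(z),f^{\circ(m+n)}(z)$ and passing to the limit in $m$ through (5).

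The main obstacle I anticipate is the matching in (4) between the geometric translation length of $T_i$ on the model $\Lambda$ and the dynamical invariant $-\tfrac12\log\alpha$, particularly in the hyperbolic case where this identification yields the classical formula $a=\pi/\log(1/\alpha)$ relating the strip width to the dilation coefficient. A secondary subtlety is verifying that the direct limit $X$ is Hausdorff and carries a well-defined complex structure making each $\iota_n$ holomorphic, both of which follow cleanly from injectivity of the transition maps.
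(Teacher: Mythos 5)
Your direct-limit construction of the abstract basin of attraction, followed by uniformization of the limit surface and classification of its fixed-point-free automorphism into translation models, is precisely the approach of Arosio--Bracci \cite{AB}, which is the reference the paper cites for this theorem (the paper itself supplies no independent proof). The verifications you sketch for (1)--(5) are correct, including the two-sided argument for (4) combining the Julia-lemma lower bound $k_\D(z,f^{\circ n}(z))\ge -\tfrac n2\log\alpha$ applied along shifted orbits with the classical rate-of-escape asymptotic and part (5).
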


The map $h$ is  called the {\sl Koenigs function} of $f$ (the adjective ``the'' is due to the essential uniqueness coming from (3)). 

We note that, by (1), $h(\D)+i\subset h(\D)$. Moreover, $f^{\circ n}(z)=h^{-1}(h(z)+in)$ for all $z\in \D$ and $n\in\N$. 

Also, a direct computation with (4) and (5) implies that
\begin{itemize}
\item[a)] $f$ is hyperbolic if and only if $\Lambda$ is a strip,
\item[b)] $f$ is parabolic if and only if $\Lambda=\Ha, -\Ha$ or $\C$.
\end{itemize}
Parabolic maps for which
\begin{itemize}
\item[c)]  $\Lambda=\C$ are called {\sl of zero hyperbolic step} and these are exactly those parabolic maps for which $\lim_{n\to \infty}k_\D(f^{\circ n}(z), f^{\circ n}(w))=0$ for all $z,w\in \D$;
\item[d)]  $\Lambda=\Ha$ or $\Lambda=-\Ha$ are called {\sl of positive hyperbolic step} and these are exactly those for which $\lim_{n\to \infty}k_\D(f^{\circ n}(z), f^{\circ n}(w))>0$ for some---and hence any---$z,w\in \D$, $z\neq w$.
\end{itemize}

It is known (see \cite{Co} or \cite{BCDbook}) that if $f$ is hyperbolic then $\{f^{\circ n}(z)\}$ converges non-tangentially to the Denjoy-Wolff point for all $z\in \D$. It is also known that if $f$ is parabolic of positive hyperbolic step, the convergence is tangential (see \cite{Po}). In this paper we show how, in case of univalent parabolic maps with zero hyperbolic step, the type of convergence can be determined from the Euclidean shape of the image of the Koenigs map. Our argument, in fact, gives another proof of the tangential convergence in case of univalent parabolic maps with positive hyperbolic step and of the non-tangential convergence in case of univalent hyperbolic maps.

\begin{remark}\label{Rem:elliptic-model}
In case $f:\D \to \D$ is univalent and has a fixed point $z_0\in \D$, a
statement similar to Theorem~\ref{Thm:canonical-model} holds. In particular,
if  $\mu \in \C$ is chosen such that $e^{-\mu}=f'(z_0)$ (so $\Re \mu\geq 0$ by Schwarz's Lemma) one can find a univalent map $h:\D \to \C$ with $h(z_0)=0$ and
\begin{enumerate}
\item $h(f(z))=e^{-\mu} h(z)$ for all $z\in \D$,
\item $\bigcup_{m\in \N}(e^{-m\mu}h(\D))=\Lambda$, where $\Lambda=\D$ (this is the case if and only if $f$ is an automorphism of $\D$) or $\Lambda=\C$.
\item If $g:\D \to \C$ is holomorphic and $g(f(z))=e^{-\mu} g(z)$ for all $z\in \D$ then there exists a surjective, holomorphic map $\psi:\Lambda\to \bigcup_{n\geq 0}(e^{-n\mu}g(\D))$ such that $g=\psi\circ h$ and $k(\lambda z)=\lambda k(z)$ for all $z\in \Lambda$. Moreover, if $g$ is univalent, then $\psi$ is a biholomorphism. 
\end{enumerate} 
Note that $\Re\mu=0$ if and only if $f$ is an elliptic automorphism of $\D$.
\end{remark}

\section{Continuous semigroups of holomorphic self-maps of the unit disc} \label{Sec:semigroup}

\begin{definition}
A {\sl continuous semigroup of holomorphic self-maps of $\D$}, or just a {\sl semigroup in $\D$} for short, is a semigroup homeomorphism between the semigroup of real non-negative numbers (with respect to  sum), and the semigroup of holomorphic self-maps of $\D$ (with respect to composition), which is continuous when $\R^+$ is endowed with the Euclidean topology and the space of holomorphic self-maps of $\D$ is endowed with the topology of uniform convergence on compacta. \end{definition}

We refer the reader to the books \cite{Abate, BCDbook, ES, Sho} for more details about and  proofs of the following facts. 

Let $(\phi_t)$ be a semigroup in $\D$ without fixed points in $\D$. For all $t>0$, $\phi_t$ has the same Denjoy-Wolff point $\tau\in \partial \D$. In other words, $\lim_{t\to+\infty}\phi_t(z)=\tau\in \partial \D$ for all $z\in \D$. Also, there exists $\lambda\leq 0$ such that $\angle\lim_{z\to \tau}\phi_t'(z)=e^{\lambda t}$ for all $t\geq 0$.
Moreover, for all $t\geq 0$, $\phi_t$ is injective. 

In case of semigroups, the Koenigs function of each $\phi_t$ can be chosen to be independent of $t$, namely, there exists a univalent map $h:\D \to \C$ such that $h(\phi_t(z))=z+it$ for all $t\geq 0$ and $z\in \D$. The function $h$ is the Koenigs function of $\phi_1$  and all properties in Theorem~\ref{Thm:canonical-model} hold. Note that $h(\D)+it\subseteq h(\D)$ for all $t \ge 0$, in other words, in case of a semigroup, $h(\D)$ is starlike at infinity.

Similarly, in case $(\phi_t)$ has a fixed point in $\D$, the Koenigs function of each $\phi_t$ can be chosen to be independent of $t$. In this case, if $\phi_t(z_0)=z_0$ for all $t\geq 0$, then $\phi_t'(z_0)=e^{-\mu t}$ for some $\mu\in \C$, $\Re \mu\geq 0$ and for all $t\geq 0$ and
 the domain $h(\D)$ (image of the common Koenigs function of $\phi_t$) is invariant under the map $z\mapsto e^{-\mu t}z$  for all $t\geq 0$.

\section{Boundary regular fixed points}\label{Sec:BRFP}

Let $f:\D\to \D$ be holomorphic and $\sigma\in \partial \D$. If $\angle\lim_{z\to \sigma}f(z)=\sigma$, the point $\sigma$ is called a {\sl boundary fixed point of $f$}. As a consequence of the Julia-Wolff-Carath\'eodory Theorem (see, {\sl e.g.}, \cite[Prop. 1.7.4]{BCDbook} or \cite[Thm. 1.2.7]{Abate}), the non-tangential limit
\[
A:=\angle\lim_{z\to \sigma}f'(z)
\]
exists and $A\in (0,+\infty]$. Moreover, $A\leq 1$ if and only if $\sigma$ is the Denjoy-Wolff point of $f$ (or $f(z)\equiv z$). 
A boundary fixed point so that $A<+\infty$ is called a {\sl boundary regular fixed point} of $f$ with multiplier $A$. The set of all boundary regular fixed points of $f$ with multiplier $A$ is denoted by $\hbox{Fix}_A(f)$. 

In order to state the main connection between boundary regular fixed points and the canonical model for iteration, we need to introduce a notation. 

Given $\mu\in \C$, $\Re \mu> 0$, $\alpha\in (0,\pi]$ and $\theta_0\in [-\pi,\pi)$, we let
\[
\hbox{Spir}[\mu, 2\alpha, \theta_0]:=\{e^{t\mu+i\theta}: t\in \R, \theta\in (-\alpha+\theta_0,\alpha+\theta_0)\},
\]
a {\sl $\mu$-spirallike sector} of amplitude $2\alpha$. If $D\subset\C$ is a domain and $S:=\hbox{Spir}[\mu, 2\alpha, \theta_0]\subset D$, we say that $S$ is a {\sl maximal} spirallike sector in $D$ provided there exist no $\theta_1\in [-\pi,\pi)$, $\beta\in (0,\pi]$ such that $\hbox{Spir}[\mu, 2\alpha, \theta_0]\subsetneq \hbox{Spir}[\mu, 2\beta, \theta_1]\subset D$. 

We denote by $\hbox{MSpir}(\mu, \alpha, D)$ the set of all maximal $\mu$-spirallike sectors of amplitude $2\alpha$ in~$D$. 

A {\sl vertical strip} of width $R>0$ is a set of the form $\{z\in \C: a<\Re
z<a+R\}$ for some $a\in \R$. If $D\subset\C$ is a domain, a vertical strip $S$ in $D$ is {\sl maximal} provided $S\subset D$ and there is no other vertical strip contained in $D$ which properly contains $S$. 

We denote by $\hbox{MStrip}(R, D)$ the set of all maximal vertical strips of width $R$ in $D$. 

\begin{theorem}\label{BRFP-model}
Let $f:\D\to \D$ be univalent, not an automorphism of $\D$, with Koenigs function $h$ and let $A>1$.
\begin{enumerate}
\item If $f$ is elliptic, $f(z_0)=z_0$ and $f'(z_0)=e^{-\mu}$, for some $z_0
  \in \D$ and $\mu\in \C$ with $\Re\mu>0$, then there is a one-to-one
  correspondence between $\hbox{\rm Fix}_A(f)$ and $\hbox{\rm MSpir}(\mu, \frac{|\mu|^2\pi}{(\log A)(\Re \mu)}, h(\D))$.
\item If $f$ is non-elliptic then there is a one-to-one correspondence between
  $\hbox{\rm Fix}_A(f)$ and $\hbox{\rm MStrip}(\frac{\pi}{\log A}, h(\D))$.
\end{enumerate}
\end{theorem}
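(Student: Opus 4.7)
The strategy is to translate the statement into a geometric one about the Koenigs domain $h(\D)$ via the Koenigs equation. In the non-elliptic case $h\circ f=h+i$, so every vertical strip in $h(\D)$ is automatically $f$-invariant; in the elliptic case $h\circ f=e^{-\mu}h$, so every $\mu$-spirallike sector in $h(\D)$ is automatically $f$-invariant. The specific widths and amplitudes in the statement are forced by the following explicit half-plane conjugation. For the non-elliptic case, the model strip $S_A:=\{w\in\C:0<\Re w<\pi/\log A\}$ is biholomorphic to $\Ha$ via a branch $\eta_A(w):=\exp(-i(\log A)(w-c))$ that intertwines $w\mapsto w+i$ with the hyperbolic dilation $z\mapsto Az$ on $\Ha$; the latter has a repelling boundary fixed point at $0\in\partial\Ha$ of multiplier exactly $A$. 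For the elliptic case, the branch $\zeta=(\log w)/\mu$ identifies $\mathrm{Spir}[\mu,2\alpha,\theta_0]$ with a horizontal strip of width $2\alpha\,\Re\mu/|\mu|^2$ and transforms $w\mapsto e^{-\mu}w$ into $\zeta\mapsto\zeta-1$; composing with a second exponential yields a half-plane model with dynamics $z\mapsto Az$, and matching the strip-width with $\pi/\log A$ fixes the prescribed amplitude.

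For the easier direction ``strip/sector $\Rightarrow$ BRFP'', given a maximal vertical strip $S\subset h(\D)$ of width $\pi/\log A$ (resp.\ a maximal $\mu$-spirallike sector of the prescribed amplitude), set $U:=h^{-1}(S)\subset\D$. Since $h$ is univalent, $U$ is a simply connected $f$-invariant subdomain of $\D$, and $\eta_A\circ h|_U\colon U\to\Ha$ is an explicit biholomorphism conjugating $f|_U$ to $z\mapsto Az$. By prime-end theory, the two boundary fixed points $0,\infty\in\partial\Ha$ correspond to two distinct prime ends of $U\subset\D$ landing at distinct points $\sigma,\tau\in\partial\D$; combining the conjugation with the Julia--Wolff--Carath\'eodory theorem identifies $\sigma$ as the BRFP associated to $0$ and gives $\angle\lim_{z\to\sigma}f'(z)=A$, i.e.\ $\sigma\in\mathrm{Fix}_A(f)$.

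The converse ``BRFP $\Rightarrow$ strip/sector'' is the main obstacle and requires producing a canonical maximal vertical strip (resp.\ spirallike sector) in $h(\D)$ attached to the BRFP $\sigma$. The key input is a local Koenigs-type linearization of $f$ at $\sigma$: by the Julia--Wolff--Carath\'eodory theorem together with the backward-iteration construction for univalent self-maps (see for instance \cite{BCDbook}), there exist an $f$-invariant petal $\Delta\subset\D$ with $\sigma$ as a boundary fixed point and a biholomorphism $\Phi\colon\Ha\to\Delta$ satisfying $\Phi(Az)=f(\Phi(z))$ and $\anglim_{z\to 0}\Phi(z)=\sigma$. In the Koenigs coordinate, $h\circ\Phi\colon\Ha\to\C$ is univalent and intertwines $z\mapsto Az$ with $w\mapsto w+i$; by the unique solvability of the local Koenigs equation at $\sigma$ and analytic continuation, $h\circ\Phi$ must agree, up to an additive constant, with the explicit branch $\eta_A^{-1}$ from the first paragraph, so that $h(\Delta)$ is a vertical strip of width $\pi/\log A$ contained in $h(\D)$. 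Maximality is then forced by the boundary structure of $h(\D)$ near the ``bottom end'' $\Im w\to-\infty$ of the strip: both vertical edges of $h(\Delta)$ accumulate at $\partial h(\D)$ there, so no wider vertical strip can fit inside $h(\D)$. Distinct BRFPs produce disjoint petals (their prime-end limits are different boundary points of $\D$) and hence distinct maximal strips, and the two assignments $\sigma\mapsto h(\Delta)$ and $S\mapsto\sigma$ are mutual inverses, establishing the one-to-one correspondence. The elliptic case is identical after replacing the half-plane model $(\Ha,z\mapsto Az)$ by the spirallike one via $\zeta=(\log w)/\mu$ and using Remark~\ref{Rem:elliptic-model} in place of Theorem~\ref{Thm:canonical-model}.
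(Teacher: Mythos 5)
The paper itself does not prove Theorem~\ref{BRFP-model}: after the statement it writes ``the proof can be found in \cite{PP1}\dots, in \cite{CoDi}\dots; in the general case, the proof can be adapted from \cite[Thm.~5.6]{BCDG0} (see also \cite[Chapter 13]{BCDbook}); we leave details to the reader.'' So there is no in-paper proof to compare yours against. Your overall strategy --- translate everything to the Koenigs domain, use the explicit half-plane/strip conjugation to pin down the widths, pass from a maximal strip to a BRFP via prime ends, and pass from a BRFP to a strip via a backward-invariant petal --- is exactly the approach of the sources the authors point to, so the architecture is the right one.

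There is, however, a genuine gap in the converse direction, specifically in your maximality argument. You claim that ``both vertical edges of $h(\Delta)$ accumulate at $\partial h(\D)$'' near the bottom end $\Im w\to-\infty$, and that this prevents a wider strip from fitting in $h(\D)$. This has the geometry backwards: the bottom end of the strip corresponds, under $h^{-1}$, to the approach to $\sigma$, and near a repelling BRFP the boundary of a petal $\Delta$ (other than $\sigma$ itself) lies \emph{in the interior} of $\D$. Hence $h(\partial\Delta\cap\D)$, which is the pair of vertical edges of $S=h(\Delta)$, is \emph{contained in} $h(\D)$ for $\Im w$ large negative, not accumulating on $\partial h(\D)$ there. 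So the bottom end is precisely where there \emph{is} room to widen locally, and the obstruction to widening must come from somewhere else. The standard way to close this gap is not geometric inspection of the edges but an angular-derivative comparison: if $S\subsetneq S'\subset h(\D)$ with $S'$ a strictly wider vertical strip, one shows that $\Delta':=h^{-1}(S')$ is again a petal at $\sigma$ (this requires proving that $(h^{-1})$ along the bottom end of $S'$ still converges to $\sigma$ --- itself a nontrivial prime-end argument), and then the strip model of $\Delta'$ forces the multiplier of $f$ at $\sigma$ to equal $e^{\pi/\mathrm{width}(S')}<A$, contradicting $\sigma\in\mathrm{Fix}_A(f)$. A related but milder gap appears in the easy direction: that the prime end of $U=h^{-1}(S)$ attached to $0\in\partial\Ha$ actually lands at a single point of $\partial\D$, and that the resulting angular derivative of $f$ there equals $A$ (rather than merely being $\le A$ or $\ge A$), are not automatic consequences of prime-end theory; both require the Julia--Wolff--Carath\'eodory comparison between the inner domain $U$ and $\D$, which you invoke by name but do not deploy. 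These are exactly the points that \cite{BCDG0} and \cite[Chapter~13]{BCDbook} take care of, so if you add the angular-derivative comparison step and the prime-end landing argument your sketch becomes a correct proof.
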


The proof of the previous result can be found in \cite{PP1} for the case $f$ is elliptic and $\mu\in \R$,  and in \cite{CoDi} in case $f$ is the time one map of a continuous semigroup of holomorphic self-maps of the unit disc. In the general case, the proof can be adapted from \cite[Thm. 5.6]{BCDG0} (see also \cite[Chapter 13]{BCDbook}); we leave details to the reader.

\section{Domains asymptotically starlike at infinity and their starlike-fication}\label{Sec:star}

\begin{definition}\label{Def:asy-star}
A  domain $\Omega\subsetneq \C$ is {\sl asymptotically starlike at infinity} if
\begin{enumerate}
\item $\Omega+i\subseteq \Omega$,
\item there exist $-\infty\leq a<b\leq +\infty$ such that 
\begin{equation}\label{def:base-domain}
\bigcup_{n\in \N}(\Omega-in)=(a,b)\times \R.
\end{equation}
\end{enumerate}
Moreover, we say that $\Omega$ is  asymptotically starlike at
infinity of {\sl hyperbolic type} if $a,b\in \R$, while we say that $\Omega$ is 
asymptotically starlike at infinity of {\sl parabolic type} if  $a=-\infty$ or $b=+\infty$.
\end{definition}

Note that, by definition, a domain asymptotically starlike at infinity is not required to be simply connected. If $h$ is the Koenigs function of a univalent self-map $f$ of $\D$, then the Koenigs domain  of $f$ is simply connected and  asymptotically starlike at infinity.
\begin{remark}\label{compact-in-Omega}
Let $\Omega\subset \C$ be a domain such that $\Omega+i\subset \Omega$. It is not hard to show that for every compact set $K\subset \bigcup_{n\in\N} (\Omega-in)$ there exists $N\in \N$ such that $K+iN\subset \Omega$. 
\end{remark}

Let $\Omega\subsetneq \C$ be a domain asymptotically starlike at infinity, and $z\in \Omega$. Let
\[
\tau_z:=\inf\{s\in \R: z+ir\in \Omega\ \hbox{for all} \ r>s\}.
\]

\begin{lemma}\label{Lem:tau-finite}
Let $\Omega\subsetneq \C$ be a domain asymptotically starlike at infinity. Then $\tau_z<+\infty$ for all $z\in \Omega$. 
\end{lemma}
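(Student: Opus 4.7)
The plan is to combine condition~(1) with a compactness argument applied to a vertical segment of length one. First I would iterate $\Omega+i\subseteq\Omega$ to get $\Omega+in\subseteq\Omega$ for every $n\in\N$, which is equivalent to the monotonicity
\[
\Omega - in \subseteq \Omega - i(n+1), \qquad n\in\N.
\]
Thus $\{\Omega-in\}_{n\in\N}$ is an \emph{increasing} sequence of open subsets of $\C$ whose union, by condition~(2), equals $(a,b)\times\R$. In particular every $z\in\Omega$ has $\Re z\in(a,b)$, since $\Omega=\Omega-0\cdot i\subseteq(a,b)\times\R$.

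Next I would fix $z\in\Omega$ and consider the compact vertical segment
\[
K:=\{\Re z+it : t\in[0,1]\},
\]
which lies in $(a,b)\times\R=\bigcup_{n\in\N}(\Omega-in)$. By compactness of $K$ together with the increasing nature of this open cover, there exists $N\in\N$ with $K\subseteq\Omega-iN$, that is, $K+iN\subseteq\Omega$. To spread this slice to all sufficiently large heights I would apply $\Omega+in\subseteq\Omega$: any real $r\geq N$ decomposes as $r=N+t+n$ with $t\in[0,1)$ and $n\in\N$, whence
\[
\Re z+ir=(\Re z+i(N+t))+in\in\Omega+in\subseteq\Omega.
\]
Since $z+is=\Re z+i(\Im z+s)$, this yields $z+is\in\Omega$ for every $s\geq N-\Im z$, and consequently $\tau_z\leq N-\Im z<+\infty$.

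The argument involves no significant obstacle: the only substantive move is recognising that the hypothesis $\Omega+i\subseteq\Omega$ converts the union in~(2) into an \emph{increasing} open cover, after which the lemma reduces to Heine--Borel on a unit segment plus a one-line decomposition $r=N+t+n$. The natural pitfall to avoid is attempting to prove the statement by pure openness of $\Omega$ at the point $z$: a small ball around $z$ shifted by the integers $n\in\N$ will miss some heights unless its radius exceeds $1/2$, so one really needs compactness applied to a segment of length at least $1$ in order to exploit the integer-shift invariance to cover a full half-line.
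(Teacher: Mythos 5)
Your proof is correct and follows essentially the same route as the paper's: the paper reduces the claim to showing that a vertical segment $\{\Re z + it : -1\le t\le 1\}$ shifted by some $iN$ lies in $\Omega$, and then invokes Remark~\ref{compact-in-Omega}, which is exactly the ``compact set inside an increasing open cover $\{\Omega-in\}$'' argument you spell out explicitly. The only differences are cosmetic — you use a segment of length one and a floor-function decomposition $r=N+t+n$, while the paper uses a segment of length two and cites the remark rather than unpacking it.
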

\begin{proof}
Let $z\in \Omega$. In order to prove that $\tau_z<+\infty$, by (1) in Definition~\ref{Def:asy-star}, it is enough to prove that there exist $n\in \N$  such that $(\{w\in \C: \Re w=\Re z,  -1\leq \Im w\leq 1\}+in)\subset \Omega$. This follows at once by condition (2) in Definition~\ref{Def:asy-star} and Remark~\ref{compact-in-Omega}. 
\end{proof}

For $z\in \C$ and $r>0$, we let
\[
D(z,r):=\{w\in \C: |w-z|<r\}.
\]

\begin{lemma}\label{Lema:dist-tau}
Let $\Omega\subsetneq \C$ be a domain  asymptotically starlike at infinity, and $z \in \Omega$. Then
\begin{enumerate}
\item  $\tau_z\neq 0$. 
\item If  $\delta_\Omega(z)> 1/2$ then $\tau_z<0$.
\item If  $\tau_z>0$ there exists $p\in \C\setminus\Omega$ such that $\Re p=\Re z$ and $|z-p|\leq 1/2$.
\end{enumerate}
\end{lemma}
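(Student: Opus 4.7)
The plan is to combine the openness of $\Omega$ with the fact that $\C\setminus\Omega$ is invariant under subtracting integer multiples of $i$ (the contrapositive of $\Omega+i\subseteq\Omega$).

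I would start by recording two observations: (a) if $\tau_z\in\R$ then $z+i\tau_z\notin\Omega$, since otherwise a Euclidean disc around $z+i\tau_z$ lies in $\Omega$ and, combined with the definitional fact that $z+ir\in\Omega$ for all $r>\tau_z$, this would give $z+ir\in\Omega$ on a neighbourhood of $\tau_z$, contradicting the definition of $\tau_z$ as an infimum; (b) iterating $\Omega+i\subseteq\Omega$ gives $\Omega+in\subseteq\Omega$ for all $n\in\N$, so $w\notin\Omega\Rightarrow w-in\notin\Omega$ for every $n\in\N$.

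Given these, (1) is immediate: $\tau_z=0$ would force $z=z+i\tau_z\notin\Omega$ by (a), contradicting $z\in\Omega$. For (2), the hypothesis $\delta_\Omega(z)>1/2$ yields $D(z,\delta)\subseteq\Omega$ for some $\delta>1/2$, hence $D(z+in,\delta)\subseteq\Omega$ for each $n\in\N$; on the vertical line through $z$ these discs meet it in open intervals of length $2\delta>1$, so intervals for consecutive non-negative integers overlap and together with $n=0$ they cover $\{z+ir:r>-\delta\}$, yielding $\tau_z\leq-\delta<0$.

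The main content of the lemma lies in (3). From $\tau_z>0$ we obtain $z+i\tau_z\notin\Omega$ by (a), and then by (b) $z+i(\tau_z-n)\notin\Omega$ for every $n\in\N$. Write $\tau_z=m+\beta$ with $m=\lfloor\tau_z\rfloor$ and $\beta\in[0,1)$. The assumption $z\in\Omega$ rules out $\beta=0$: indeed $\tau_z>0$ together with $\beta=0$ would force $m\geq 1$, and then $z=z+i(\tau_z-m)\notin\Omega$. Hence $\beta\in(0,1)$, and both
\[
p_1:=z+i\beta,\qquad p_2:=z+i(\beta-1)
\]
lie in $\C\setminus\Omega$ with real part equal to $\Re z$, at distances $\beta$ and $1-\beta$ from $z$, respectively. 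Since $\beta+(1-\beta)=1$, one of these distances is at most $1/2$, and the corresponding $p_j$ is the required $p$. No serious obstacle is expected; the only point that requires a trick is step (3), where one must choose the better of two candidates whose distances to $z$ sum to $1$, thereby trading an a priori arbitrarily large $\tau_z$ for a vertical gap of size $\leq 1/2$ via the integer-periodicity of non-membership in $\Omega$.
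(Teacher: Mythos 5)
Your proof is correct. Parts (1) and (2) follow the same line as the paper (openness of $\Omega$ gives a disc, translate by $in$, intervals on the vertical line overlap once $\delta>1/2$). The interesting difference is in (3): the paper argues by contraposition, setting $r$ to be the vertical distance from $z$ to $\C\setminus\Omega$ and showing $r>1/2 \Rightarrow \tau_z<0$; to extract the witness $p$ one then (implicitly) uses that the infimum defining $r$ is attained since $\C\setminus\Omega$ is closed. You instead produce $p$ directly: starting from $z+i\tau_z\notin\Omega$ (your observation (a), which is essentially the key step the paper leaves tacit) and using the downward $-i$-invariance of $\C\setminus\Omega$, you reduce $\tau_z$ modulo $1$ and pick whichever of $z+i\beta$, $z+i(\beta-1)$ is closer to $z$. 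This is more constructive and sidesteps the attained-infimum argument, at the cost of implicitly invoking Lemma~\ref{Lem:tau-finite} to ensure $\tau_z<+\infty$ so that $\lfloor\tau_z\rfloor$ makes sense (worth a one-line citation, but not a gap given it is the immediately preceding lemma). Both arguments rest on the same integer periodicity of non-membership; yours makes the witness explicit.
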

\begin{proof}
(1) Let $z\in \Omega$.  By (1) of Definition~\ref{Def:asy-star}, 
\[
D(z,\delta_\Omega(z))+in=D(z+in,\delta_\Omega(z))\subset\Omega
\]
 for all $n\in \N$. Since $\Omega$ is open, hence $\delta_\Omega(z)>0$, this implies that $\tau_z\neq 0$.
 
(2) If $\delta_\Omega(z)>1/2$, then $\{w\in \C: \Re w=\Re z, \Im z-1/2\leq \Im w\leq \Im z+1/2\}\subset \Omega$, hence, $z+it\in \Omega$ for all $t\geq 0$, and $\tau_z<0$.

(3) Let $r:=\inf\{|z-\Re z-it|: t\in \R, \Re z+it\not\in\Omega\}$. By (1) of Definition~\ref{Def:asy-star}, $\{\Re z+it+N, t\in (-r,r)\}\subset \Omega$ for all $N\in \N$. Hence, if $r>1/2$, $z+it\in \Omega$ for all $t \ge 0$ and $\tau_z<0$.
   \end{proof}

\begin{definition}
Let $\Omega\subsetneq \C$ be a domain asymptotically starlike at infinity. The {\sl starlike-fication} of $\Omega$ is the set $\Omega^\ast$ defined by
\[
\Omega^\ast:=\{z\in \Omega: \tau_z<0\}.
\] 
\end{definition}

\begin{lemma}\label{Lem:starlike-fication}
Let $\Omega\subsetneq \C$ be a domain asymptotically starlike at
infinity and $\Omega^\ast$ its starlike-fication. Then $\Omega^\ast\not=\emptyset$ is a  simply connected domain starlike at infinity, $\Omega^\ast\subseteq \Omega$ and
\begin{equation}\label{Eq:same-base}
\bigcup_{n\in \N}(\Omega-in)=\bigcup_{t\geq 0}(\Omega^\ast-it)
\end{equation}
\end{lemma}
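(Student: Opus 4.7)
I would split the proof into standard bookkeeping based on a single translation identity, together with one substantive uniform estimate on $\tau$ from which connectedness and simple connectedness will follow routinely.

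The key computation is the translation identity $\tau_{z+it}=\tau_z-t$, obtained by substituting $r'=r+t$ in the defining infimum; I would first extend $\tau_{\cdot}$ to all of $\bigcup_n(\Omega-in)$ by the same formula (which is still finite, by Lemma~\ref{Lem:tau-finite} applied to $z+in$ for large $n$). From this identity: the inclusion $\Omega^*\subseteq\Omega$ is tautological; for any $z_0\in\Omega$ and integer $N>\tau_{z_0}$ one has $\tau_{z_0+iN}<0$, so $\Omega^*\neq\emptyset$; and for $z\in\Omega^*$, $t\ge0$ one has $\tau_{z+it}=\tau_z-t<0$, which is starlikeness at infinity. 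For openness, I would fix $z\in\Omega^*$ and $\epsilon>0$ with $\tau_z<-\epsilon$; the compact segment $\{z+it:-\epsilon\le t\le 1\}$ lies in the open set $\Omega$, hence in a tubular neighbourhood $D(z,\rho)+i[-\epsilon,1]\subseteq\Omega$, and for every $w\in D(z,\rho)$ iterating $\Omega+i\subseteq\Omega$ from $w+i\in\Omega$ yields $w+i[-\epsilon,+\infty)\subseteq\Omega$, so $\tau_w\le-\epsilon<0$. The identity~(\ref{Eq:same-base}) is then immediate: $\supseteq$ uses starlikeness ($w\in\Omega^*-it$ and integer $n\ge t$ give $w+in=(w+it)+i(n-t)\in\Omega^*\subseteq\Omega$), and $\subseteq$ uses the translation identity (for $w\in\bigcup_n(\Omega-in)$, choosing $T\ge\max(0,\tau_w+1)$ forces $\tau_{w+iT}<0$, so $w\in\Omega^*-iT$).

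The only real obstacle is a uniform version of Lemma~\ref{Lem:tau-finite}: for every compact $K\subseteq\bigcup_n(\Omega-in)=(a,b)\times\R$, $\sup_{w\in K}\tau_w<+\infty$. To prove it I would apply Remark~\ref{compact-in-Omega} to the compact set $K+i[0,1]$ to obtain an integer $N_0$ with $K+i[N_0,N_0+1]\subseteq\Omega$, then use $\Omega+i\subseteq\Omega$ inductively to deduce $K+i[N_0,+\infty)\subseteq\Omega$, which gives $\tau_w\le N_0$ uniformly on $K$.

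With this bound in hand, connectedness and simple connectedness follow by a ``push-up'' argument. Given $z_1,z_2\in\Omega^*$, I would set $M$ larger than $\max(\Im z_1,\Im z_2)$ and $\sup_{x\in[\Re z_1,\Re z_2]}\tau_x$ (finite by the previous step, since $[\Re z_1,\Re z_2]\subseteq(a,b)$ is compact). The three-segment concatenation going up from $z_1$ to $\Re z_1+iM$, horizontally across to $\Re z_2+iM$, then down to $z_2$ sits in $\Omega^*$---vertical pieces by starlikeness at infinity, horizontal piece because $\tau_{x+iM}=\tau_x-M<0$ throughout---giving path-connectedness. For simple connectedness, given a loop $\gamma:[0,1]\to\Omega^*$, I would pick $M$ exceeding $\sup_s\Im\gamma(s)$ and $\sup_s\tau_{\Re\gamma(s)}$ (both finite, by the bound above applied to the compact interval $\Re\gamma([0,1])\subseteq(a,b)$) and use the homotopy $H(s,t):=\gamma(s)+it(M-\Im\gamma(s))$: it stays in $\Omega^*$ by starlikeness at infinity and contracts $\gamma$ into the loop $s\mapsto\Re\gamma(s)+iM$, which lies in the convex horizontal segment $\Re\gamma([0,1])+iM\subseteq\Omega^*$ and hence is null-homotopic there.
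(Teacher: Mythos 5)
Your proof is correct, but it departs from the paper's at two points worth noting. For openness, the paper runs a proof by contradiction: it takes a sequence $z_n\in\Omega\setminus\Omega^*$ converging to $z_0\in\Omega^*$, produces $t_n>0$ with $z_n+it_n\notin\Omega$, shows via Remark~\ref{compact-in-Omega} that $\sup_n t_n<\infty$, extracts a convergent subsequence and derives a contradiction with closedness of $\C\setminus\Omega$. Your direct tubular-neighbourhood argument, driven by the identity $\tau_{z+it}=\tau_z-t$, reaches the same conclusion constructively and is arguably cleaner, since it produces an explicit radius $\rho$ with $D(z,\rho)\subset\Omega^*$. For simple connectedness, the paper sketches path-connectedness (``by a similar argument\dots there is $L\in\N$ such that the segment joining $z+iL$ and $w+iL$ is contained in $\Omega^*$'') and then gives a very short winding-number criterion: if a closed curve $\Gamma\subset\Omega^*$ winds around $z$, the downward ray from $z$ must meet $\Gamma$ in a point $p$ with $\Re p=\Re z$, $\Im p<\Im z$, and starlikeness at infinity forces $z\in\Omega^*$. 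Your explicit homotopy $H(s,t)=\gamma(s)+it(M-\Im\gamma(s))$ is a genuinely different route: it establishes contractibility directly rather than via a complement criterion, at the cost of needing the uniform bound $\sup_{w\in K}\tau_w<\infty$ over compacts, which you correctly derive from Remark~\ref{compact-in-Omega} applied to $K+i[0,1]$. The paper's winding-number argument is shorter; yours is more self-contained and supplies the details the paper only gestures at (in particular, the paper never actually proves the claim that the horizontal segment at height $L$ lies in $\Omega^*$, which is exactly your uniform bound).

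One small point of clarity: when you write $\sup_{x\in[\Re z_1,\Re z_2]}\tau_x$, you are implicitly using that $\tau$ extends to points of $(a,b)\times\R$ outside $\Omega$ (via $\tau_x=\tau_{x+in}-n$ for large $n$), which you did set up earlier; since that interval is compact in $(a,b)$, the supremum is finite by your uniform estimate. This is fine, but worth flagging since $\tau$ was originally defined only on $\Omega$. Also, since $\tau_z$ could a priori equal $-\infty$ on $\Omega^*$, the translation identity should be read in $[-\infty,+\infty)$; none of your steps are affected.
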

\begin{proof}
If $z\in \Omega$, by Lemma~\ref{Lem:tau-finite}, $\tau_z<+\infty$. Hence, for every $t>\tau_z$, $z+it\in \Omega$, which implies that $\tau_{z+it}<0$ for all $t>\tau_z$, that is, $z+it\in \Omega^\ast$ for all $t>\tau_z$, proving that $\Omega^\ast$ is  non-empty.

If $z\in \Omega^\ast$, then by definition of $\tau_z$, $z+it\in \Omega$ for all $t\geq 0$. In particular, $\tau_{z+it}<0$ for all $t\geq 0$, hence $z+it\in \Omega^\ast$ for all $t\geq 0$. Thus $\Omega^*$ is starlike at infinity.
 
We show that $\Omega^\ast$ is open. For $w_0\in \C$ and $a,b>0$ let 
\[
D(w_0,a,b):=\{z\in \C: |\Re z-\Re w_0|<a, |\Im z-\Im w_0|<b\}.
\]

Let $z_0\in \Omega^\ast$. Assume by contradiction that there exists a sequence $\{z_n\}\subset\C\setminus\Omega^\ast$ such that $\lim_{n\to \infty}z_n=z_0$. 

Since, in particular, $z_0\in \Omega$,  there exist $0<\epsilon_1,\epsilon_2<1$ such that 
\begin{equation}\label{Eq1-pf-starlike}
\overline{D(z_0,\epsilon_1,\epsilon_2)}\subset \Omega.
\end{equation}
Without loss of generality, we can assume that $\{z_n\}\subset D(z_0,\epsilon_1,\epsilon_2)$, which means that $z_n\in \Omega\setminus\Omega^\ast$ for every $n\in \N$. In particular, for every $n\in \N$ there exists $t_n>0$ such that $z_n+it_n\not\in \Omega$. 

By Remark~\ref{compact-in-Omega} and \eqref{Eq1-pf-starlike}, there exists $N_0\in \N$ such that $D(z_0,\epsilon_1,2)+iN_0 \subset \Omega$. Thus,  by (1) in Definition~\ref{Def:asy-star}, 
\[
D(z_0+it,\epsilon_1,2)=D(z_0,\epsilon_1,2)+it\subset  \Omega
\]
for all $t\geq N_0$. Since $z_n+it_n\in D(z_0+it_n, \epsilon_1, \epsilon_2)\subset D(z_0+it_n, \epsilon_1, 2)$, it follows  that $\sup_{n\in \N} t_n<+\infty$. Thus, up to extracting subsequences, we can  assume that $\{t_n\}$ converges to some $t_0\geq 0$. Therefore, $\lim_{n\to \infty}(z_n+it_n)=z_0+it_0$. Since $z_n+it_n\in \C\setminus\Omega$, which is closed, it follows that $z_0+it_0\not\in\Omega$. But then, by definition of $\Omega^\ast$, we have $z_0\not\in\Omega^\ast$, a contradiction.

Next, it is easy to see that $\Omega^\ast$ is a simply connected domain. In fact, by a similar argument which we have used to show that $\Omega^*$ is open, one can prove that for any $z, w \in \Omega^*$ there is $L \in \N$ such that the line segment joining $z+iL$ and $w+iL$ is contained in $\Omega^*$. This immediately yields the pathconnectedness of $\Omega^*$. If $\Gamma$ is a closed curve in $\Omega^*$ and $z \in \C\setminus \Gamma$ such that $\Gamma$ has nonvanishing winding number around $z$, then  there is  a point $p\in \Gamma\subset \Omega^*$ with $\Re p=\Re z$ and $\Im p<\Im z$, so $z \in \Omega^*$. Hence $\Omega^*$ is simply connected.

Finally, \eqref{Eq:same-base} follows at once because, by Remark~\ref{compact-in-Omega}, for every $z\in \Omega$ there exists $t_0 \ge 0$ such that $z+it\in \Omega$ for all $t\geq t_0$. 
\end{proof}

Let $\Omega\subsetneq \C$ be a domain which is parabolic asymptotically
starlike at infinity.
We aim to localize the hyperbolic metric of $\Omega$ with respect to that of
$\Omega^\ast$, and start with a definition:

\begin{definition}
If $D\subset \C$ is a domain and $r>0$, we let 
\[
D_r:=\{z\in D: \delta_D(z)>r\}.
\]
\end{definition}

\begin{lemma}\label{Lem:para-non-empty}
Let $\Omega\subsetneq \C$ be a domain asymptotically starlike at infinity of parabolic type. Then $\Omega_r\neq\emptyset$ for all $r>0$.
\end{lemma}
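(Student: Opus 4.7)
The plan is to exploit the parabolic hypothesis to find an arbitrarily wide vertical strip available inside $\bigcup_{n\in\N}(\Omega-in)$, insert a large Euclidean disc there, then translate it upward into $\Omega$ using Remark~\ref{compact-in-Omega}.

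Fix $r>0$. By the definition of parabolic type, we have $\bigcup_{n\in\N}(\Omega-in)=(a,b)\times\R$ with $a=-\infty$ or $b=+\infty$; without loss of generality assume $b=+\infty$ (the other case being symmetric in the vertical axis). Then I can choose a point $p\in\C$ with $\Re p$ so large that $\Re p-2r>a$ (if $a=-\infty$ any $p$ works, otherwise pick e.g.\ $\Re p=a+3r$). With this choice, the closed disc $\overline{D(p,2r)}$ satisfies
\[
\overline{D(p,2r)}\subset\{w\in\C:\Re p-2r<\Re w<\Re p+2r\}\subset (a,+\infty)\times\R=\bigcup_{n\in\N}(\Omega-in),
\]
so it is a compact subset of $\bigcup_{n\in\N}(\Omega-in)$.

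Now apply Remark~\ref{compact-in-Omega}: there is $N\in\N$ such that $\overline{D(p,2r)}+iN\subset\Omega$. Setting $q:=p+iN$, the inclusion $D(q,2r)\subset\Omega$ yields $\delta_\Omega(q)\geq 2r>r$, hence $q\in\Omega_r$ and $\Omega_r\neq\emptyset$.

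There is no real obstacle here; the only thing to notice is that the hyperbolic case (where both $a$ and $b$ are finite) is genuinely excluded, because then the maximal Euclidean disc one can place inside $(a,b)\times\R$ has radius at most $(b-a)/2$, and no translate of $\Omega$ can contain a disc of radius exceeding that. Thus the assumption $a=-\infty$ or $b=+\infty$ is used in an essential way, exactly to guarantee a disc of arbitrary radius fits into the base strip.
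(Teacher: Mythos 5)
Your proof is correct and follows essentially the same route as the paper: use the parabolic hypothesis to fit a closed disc of radius exceeding $r$ inside the base strip $(a,b)\times\R$, then push it upward into $\Omega$ via Remark~\ref{compact-in-Omega}. (Your use of radius $2r$ gives $\delta_\Omega(q)\geq 2r>r$ directly, whereas the paper takes radius $r$ and tacitly relies on openness of $\Omega$ for the strict inequality; both are fine.)
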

\begin{proof}
Let $a,b$ as in (2) of Definition~\ref{Def:asy-star}. Since $\Omega$ is parabolic, we have $a=-\infty$ or $b=+\infty$. This implies that for any $r>0$ there exists $x\in \R$  such that  $\overline{D(x,r)}\subset (a,b)\times \R$. By Remark~\ref{compact-in-Omega}, there exists $n\in \N$ such that $\overline{D(x,r)}+in\subset \Omega$. Therefore, $\delta_\Omega(x+in)>r$ and $\Omega_r$ is non-empty.
\end{proof}

\begin{remark}\label{Rem:inside-Omegastar}
Let $\Omega\subsetneq \C$ be a domain asymptotically starlike
at infinity of parabolic type. Then, by Lemma~\ref{Lema:dist-tau},   $\Omega_r\subset
\Omega^\ast$ for all $r\geq 1/2$. Hence the starlike-fication $\Omega^*$ is in
a sense a `large' subset of $\Omega$.
\end{remark}

\begin{proposition}\label{Prop:loc-1}
Let $\Omega\subsetneq \C$ be a domain starlike at infinity of parabolic type. Then for every $R>0$ there exists $r>1$ such that for every $z\in \Omega_r$, 
\[
B_\Omega(z, R)\subset \Omega_{1}\subset \Omega^\ast.
\]
\end{proposition}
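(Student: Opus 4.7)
The plan is to deduce the proposition from a two-sided Koebe-type estimate for the hyperbolic density of $\Omega$ which is valid \emph{without} the hypothesis that $\Omega$ is simply connected. Concretely, I would first establish
\[
\frac{|v|}{4\,\delta_\Omega(w)} \le \kappa_\Omega(w;v) \le \frac{|v|}{\delta_\Omega(w)}, \qquad w \in \Omega,\; v \in \C.
\]
The upper estimate is the standard Schwarz--Pick inequality applied to the Euclidean disc $D(w,\delta_\Omega(w)) \subset \Omega$. For the lower estimate, pick $p \in \partial \Omega$ realizing the distance $|w-p| = \delta_\Omega(w)$. Since $\C \setminus \Omega$ is invariant under downward vertical translations (this is the content of ``starlike at infinity''), it contains the full ray $L_p := \{p - it : t \ge 0\}$. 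Hence $\Omega \subset V := \C \setminus L_p$, and by monotonicity of the hyperbolic metric under inclusion, $\kappa_\Omega \ge \kappa_V$. The slit plane $V$ is simply connected and conformally equivalent to $\UH$ via $g(\zeta) := \sqrt{i(\zeta-p)}$ (with the branch of the square root whose image lies in $\UH$), and a direct pull-back computation yields
\[
\kappa_V(\zeta;v) \;=\; \frac{|v|}{4\,|\zeta - p|\,\sin\phi(\zeta)}, \qquad \phi(\zeta) \in (0,\pi).
\]
Since $\sin\phi(w) \le 1$, we obtain $\kappa_V(w;v) \ge |v|/(4\delta_\Omega(w))$, which transfers to $\Omega$ by the inclusion above.

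The rest is a standard distortion-estimate argument. As $\delta_\Omega$ is $1$-Lipschitz in the Euclidean metric, for any absolutely continuous curve $\gamma:[0,1] \to \Omega$ we have
\[
\bigl|\log\delta_\Omega(\gamma(1)) - \log\delta_\Omega(\gamma(0))\bigr| \;\le\; \int_0^1 \frac{|\gamma'(t)|}{\delta_\Omega(\gamma(t))}\,dt \;\le\; 4\,\ell_\Omega(\gamma),
\]
where the second inequality uses the Koebe lower bound just established. Minimizing over curves from $z$ to $w$ yields
\[
|\log\delta_\Omega(z) - \log\delta_\Omega(w)| \;\le\; 4\,k_\Omega(z,w) \qquad \text{for all } z,w \in \Omega.
\]
Setting $r := 2 e^{4R} > 1$, any $z \in \Omega_r$ and $w \in B_\Omega(z,R)$ then satisfy $\delta_\Omega(w) \ge \delta_\Omega(z)\,e^{-4R} > 2 > 1$, so $w \in \Omega_1$; hence $B_\Omega(z,R) \subset \Omega_1$. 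The inclusion $\Omega_1 \subset \Omega^\ast$ is immediate from Remark~\ref{Rem:inside-Omegastar} (and is trivial when $\Omega$ is itself starlike at infinity, since then $\Omega^\ast = \Omega$).

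The main obstacle is proving the Koebe-type lower bound $\kappa_\Omega(w;v) \ge |v|/(4\delta_\Omega(w))$ without the simple-connectedness of $\Omega$: here the classical Koebe $\frac{1}{4}$-theorem is not directly available, and one really needs the starlike-at-infinity hypothesis to pass via a slit plane $V$ on which an explicit conformal representation gives the bound. The parabolic-type hypothesis itself is not used in the proof but is needed in the background (Lemma~\ref{Lem:para-non-empty}) to ensure that $\Omega_r$ is nonempty for arbitrarily large $r$, without which the statement would be vacuous.
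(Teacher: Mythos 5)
Your proof hinges on the claim that, for $p \in \partial\Omega$, the entire downward ray $L_p = \{p - it : t \ge 0\}$ lies in $\C\setminus\Omega$. That is correct when $\Omega$ is genuinely starlike at infinity (i.e.\ $\Omega + it \subseteq \Omega$ for all $t \ge 0$), but the proposition must apply to domains that are merely \emph{asymptotically} starlike at infinity, satisfying only $\Omega + i \subseteq \Omega$: that is how it is invoked in Theorem~\ref{Thm:sigma-qg_gen}, and the paper's proof of this proposition appeals only to condition (1) of Definition~\ref{Def:asy-star}. (If $\Omega$ were truly starlike at infinity, one would have $\Omega^\ast = \Omega$ and the result would be of no use for the starlike-fication.) From $\Omega + i \subseteq \Omega$ one gets only that the \emph{discrete} set $\{p - in : n \in \N\}$ avoids $\Omega$, not the full ray, so $\Omega \not\subset \C\setminus L_p$ in general and the slit-plane comparison is unavailable. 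In fact the Koebe-type lower bound $\kappa_\Omega(w;v) \ge |v|/(4\delta_\Omega(w))$ is false for such domains: take $\Omega = \C\setminus\{-in : n \in \N\}$, which is asymptotically starlike at infinity of parabolic type; near the isolated boundary point $0$ it contains $D(0,1)\setminus\{0\}$, so for small $\epsilon > 0$,
\begin{equation*}
\kappa_\Omega(\epsilon;1) \;\le\; \kappa_{D(0,1)\setminus\{0\}}(\epsilon;1) \;=\; \frac{1}{2\epsilon\log(1/\epsilon)} \;<\; \frac{1}{4\epsilon} \;=\; \frac{1}{4\,\delta_\Omega(\epsilon)} \, .
\end{equation*}
Correspondingly, the derived Lipschitz estimate $|\log\delta_\Omega(z) - \log\delta_\Omega(w)| \le 4k_\Omega(z,w)$ also breaks down: near a puncture, $\log\delta_\Omega$ can vary by a fixed amount at arbitrarily small hyperbolic cost, so the step from $z\in\Omega_r$ to $B_\Omega(z,R)\subset\Omega_1$ is not justified.

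The paper sidesteps any pointwise density bound with a compactness argument. Assuming the conclusion fails, it produces $z_n \in \Omega_n$ and $q_n \in \Omega$ with $\delta_\Omega(q_n) \le 1$ and $k_\Omega(z_n,q_n) < R$; it then takes $\tilde q_n \in \partial\Omega$ near $q_n$, notes that $\tilde q_n - i \notin\Omega$ as well (this is where $\Omega + i \subseteq \Omega$ enters), translates these two boundary points to $i$ and $0$, and compares with the fixed twice-punctured plane $V = \C\setminus\{0,i\}$. Completeness of $(V,k_V)$ forces the $V$-distance from $\partial D(i,1)$ to $\partial(D(0,n)\cup D(i,n))$ to tend to infinity, contradicting $k_\Omega(z_n,q_n) < R$ for all $n$. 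Your slit-plane Koebe estimate is cleaner and more quantitative, but it genuinely requires the whole ray in the complement---precisely the property that the starlike-fication $\Omega^\ast$ is designed to restore, and hence not something one may assume about $\Omega$ itself at this stage.
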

\begin{proof}
Assume by contradiction that there exists $R>0$ such that for every $n\in \N$ there exist $z_n\in \Omega_n$ and $q_n\in \Omega$,  such that $\delta_\Omega(q_n)\leq 1$ and $k_{\Omega}(z_n,q_n)<R$. 

Let $\tilde q_n\in \partial \Omega$ be such that $|q_n-\tilde q_n|\leq 1$. By (1) of Definition~\ref{Def:asy-star}, $\tilde q_n-i\not\in\Omega$.  Let $T_n:\C \to \C$ be the translation defined by $T_n(z)=z-\tilde q_n+i$. Note that $T_n(\tilde q_n)=i$, $T_n(\tilde q_n-i)=0$ and 
\[
|T_n(q_n)-i|=|q_n-\tilde q_n|\leq 1.
\]
Moreover, let $V:=\C\setminus\{0,i\}$. Taking into account that $\Omega\subset \C\setminus\{\tilde q_n, \tilde q_n-i\}$, we have
\[
\delta_V(T_n(z_n))=\delta_{\C\setminus\{\tilde q_n, \tilde q_n-i\}}(z_n)\geq \delta_\Omega(z_n)>n.
\]
Finally, note that $\Omega$ is biholomorphic to $T_n(\Omega)$ via $T_n$. Therefore, since $T_n(\Omega)\subset V$,
\[
R>k_\Omega(q_n,z_n)=k_{T_n(\Omega)}(T_n(q_n), T_n(z_n))\geq k_V(T_n(q_n), T_n(z_n)).
\]
Let $A:=\partial D(i, 1)$ and $B_n:=\partial(D(0,n)\cup D(i,n))$. Let 
\[
k_n:=k_V(A,B_n):=\inf\{k_V(z,w): z\in A\setminus \{0\}, w\in B_n\}.
\]
 Since $V$ is complete hyperbolic, $\lim_{n\to \infty}k_n=+\infty$ (otherwise we would find a sequence in $V$ converging to infinity which stays at finite hyperbolic distance from a compact subset of $V$).

By the previous considerations, $T_n(q_n)\in \overline{D(i, 1)}$, while $T_n(z_n)\in \C\setminus (\overline{D(0,n)}\cup\overline{D(i,n)})$. Therefore, 
\[
  k_n=k_V(A,B_n)
 \le k_V(T_n(q_n),T_n(z_n)) <R,
\]
a contradiction.
\end{proof}

The next result is a sort of converse of the previous one:
\begin{proposition}\label{Prop:loc-2}
Let $\Omega\subsetneq \C$ be a domain asymptotically starlike at infinity of parabolic type. Then there exists $S>0$ such that  for every $z\in \Omega_1$, 
\[
B_\Omega(z, S)\subset  \Omega^\ast.
\]
\end{proposition}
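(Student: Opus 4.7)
My plan is to argue by contradiction, in the spirit of Proposition~\ref{Prop:loc-1}. Suppose no such $S$ exists: then for every $n\in\N$ there are $z_n\in\Omega_1$ and $w_n\in\Omega\setminus\Omega^\ast$ with $k_\Omega(z_n,w_n)<1/n$. I first extract two obstructing boundary points of $\Omega$ near $w_n$. Since $w_n\notin\Omega^\ast$, parts (1) and (3) of Lemma~\ref{Lema:dist-tau} together yield $\tau_{w_n}>0$ and a point $p_n\in\C\setminus\Omega$ with $\Re p_n=\Re w_n$ and $|w_n-p_n|\le 1/2$. The condition $\Omega+i\subseteq\Omega$ immediately forces $p_n-i\in\C\setminus\Omega$ as well, since otherwise $p_n=(p_n-i)+i\in\Omega$.

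Next, I translate these two obstructions to a standard position via $T_n(z):=z-p_n+i$, so that $T_n(p_n)=i$, $T_n(p_n-i)=0$, and $T_n(\Omega)\subset V:=\C\setminus\{0,i\}$. Then $|T_n(w_n)-i|=|w_n-p_n|\le 1/2$ places $T_n(w_n)$ inside $\overline{D(i,1/2)}$, while $\delta_\Omega(z_n)>1$ combined with $p_n\in\C\setminus\Omega$ gives $|T_n(z_n)-i|=|z_n-p_n|>1$, placing $T_n(z_n)$ outside $\overline{D(i,1)}$.

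The crucial step is to bound $k_V(T_n(w_n),T_n(z_n))$ from below by a fixed positive constant. Unlike in Proposition~\ref{Prop:loc-1}, I do not need this lower bound to tend to infinity, since the assumption $k_\Omega(z_n,w_n)<1/n$ already tends to $0$. For this I will fix the compact annulus $K:=\{w\in\C: 1/2\le |w-i|\le 3/4\}$, which is disjoint from $\{0,i\}$ and hence is a compact subset of $V$; by continuity of $\kappa_V$ there is $\lambda>0$ with $\kappa_V(z;v)\ge \lambda|v|$ on $K$. Any path in $V$ from $T_n(w_n)$ to $T_n(z_n)$ must contain a subarc lying inside $K$ and joining its two boundary circles, whose Euclidean length is at least $1/4$ (by the $1$-Lipschitz radial projection $w\mapsto |w-i|$), and hence whose hyperbolic length in $V$ is at least $\lambda/4$. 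Using that $T_n$ is an isometry and that the inclusion $T_n(\Omega)\hookrightarrow V$ is a contraction for hyperbolic distances, this yields
\[
\frac{1}{n}>k_\Omega(z_n,w_n)\ge k_V(T_n(z_n),T_n(w_n))\ge \frac{\lambda}{4},
\]
a contradiction for large $n$. I expect the only delicate point to be the joint use of Lemma~\ref{Lema:dist-tau}(3) and the vertical invariance $\Omega+i\subseteq\Omega$ to produce $p_n$ and $p_n-i$ simultaneously; the remainder is elementary planar hyperbolic geometry.
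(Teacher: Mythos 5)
Your proof is correct and follows essentially the same strategy as the paper: argue by contradiction, produce two vertically aligned obstruction points $p_n$, $p_n-i$ near $w_n$, translate so they sit at $i$ and $0$, and bound $k_V$ from below. The paper obtains $\delta_\Omega(q_n)\le 1/2$ from part (2) of Lemma~\ref{Lema:dist-tau} and then re-uses the translation from Proposition~\ref{Prop:loc-1} verbatim, while you invoke parts (1) and (3) directly — an equivalent route. The one place where you add genuine value is the final step: the paper simply asserts that $\delta_V(T_n(z_n))>1$ and $\delta_V(T_n(q_n))\le 1/2$ yield a contradiction as $n\to\infty$, leaving the uniform positive lower bound on $k_V$ to the reader, whereas your compact-annulus argument with the $1$-Lipschitz radial projection makes that bound explicit and self-contained. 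This is a minor expository improvement, not a different method.
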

\begin{proof}
The argument is similar to the one used in Proposition~\ref{Prop:loc-1}, so we just sketch the proof.

Assume by contradiction that for every $n\in \N$ there exist $z_n\in \Omega_1$ and $q_n\in \Omega\setminus\Omega^\ast$ such that $k_\Omega(z_n, q_n)<\frac{1}{n}$. By Lemma~\ref{Lema:dist-tau}, $\delta_\Omega(q_n)\leq 1/2$. Therefore, using the translation $T_n$ as in the proof of Proposition~\ref{Prop:loc-1}, and keeping the same notation, we have
\[
\frac{1}{n}>k_\Omega(q_n,z_n)=k_{T_n(\Omega)}(T_n(q_n), T_n(z_n))\geq k_V(T_n(q_n), T_n(z_n)).
\]
Since $\delta_V(T_n(z_n))>1$ and $\delta_V(T_n(q_n))\leq 1/2$ (see again the proof of Proposition~\ref{Prop:loc-1}), we obtain a contradiction for $n\to \infty$.
\end{proof}

Now we are ready to show that in $\Omega_1$, the (infinitesimal) hyperbolic metrics of $\Omega$ and $\Omega^\ast$ are equivalent:

\begin{theorem}\label{Thm:loc-inf}
Let $\Omega\subsetneq \C$ be a domain asymptotically starlike at infinity of parabolic type. Then there exists $c>1$ such that for every $z\in \Omega_1$ and $v\in \C$,
\[
\kappa_\Omega(z;v)\leq \kappa_{\Omega^\ast}(z;v)\leq c \kappa_\Omega(z;v).
\]
\end{theorem}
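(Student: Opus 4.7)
The plan is to observe that the lower bound follows for free, while the upper bound is essentially a bookkeeping exercise that threads together the three results we already have, namely the ``localizing ball'' statement of Proposition~\ref{Prop:loc-2}, the hyperbolic-ball localization Lemma~\ref{lem:local-disc}, and the contraction property of holomorphic maps for the hyperbolic metric.

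First I would dispatch the lower bound: since $\Omega^\ast \subseteq \Omega$ by Lemma~\ref{Lem:starlike-fication}, the inclusion map $\Omega^\ast \hookrightarrow \Omega$ is holomorphic, so the non-expansion of $\kappa$ under holomorphic maps gives $\kappa_\Omega(z;v) \le \kappa_{\Omega^\ast}(z;v)$ for all $z \in \Omega^\ast$ and $v \in \C$, in particular for $z \in \Omega_1$ (which is contained in $\Omega^\ast$ by Remark~\ref{Rem:inside-Omegastar}).

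For the upper bound, I would proceed as follows. By Proposition~\ref{Prop:loc-2}, pick $S>0$ such that $B_\Omega(z,S) \subset \Omega^\ast$ for every $z \in \Omega_1$. Fix such a $z$ and an arbitrary $v \in \C$. Applying Lemma~\ref{lem:local-disc} to the domain $D=\Omega$ with $R=S$ yields a constant $c>1$, depending only on $S$, with
\[
\kappa_{B_\Omega(z,S)}(z;v) \le c\,\kappa_\Omega(z;v).
\]
Since the inclusion $B_\Omega(z,S) \hookrightarrow \Omega^\ast$ is holomorphic, the contraction property gives $\kappa_{\Omega^\ast}(z;v) \le \kappa_{B_\Omega(z,S)}(z;v)$. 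Chaining the two inequalities produces the required bound $\kappa_{\Omega^\ast}(z;v) \le c\,\kappa_\Omega(z;v)$, and the constant is uniform in $z \in \Omega_1$ because $S$ was.

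There is no real obstacle here: the substantive work has already been done in Proposition~\ref{Prop:loc-2}, which uses the parabolic hypothesis to guarantee a uniform hyperbolic neighbourhood of every point of $\Omega_1$ remaining inside $\Omega^\ast$. The only point worth double-checking during write-up is that Lemma~\ref{lem:local-disc} is applied with the ambient domain $\Omega$ (not $\Omega^\ast$), so that the ball $B_\Omega(z,S)$ is the one whose metric is being compared, and that the containment $B_\Omega(z,S) \subset \Omega^\ast$ from Proposition~\ref{Prop:loc-2} is exactly what allows us to replace $\kappa_{B_\Omega(z,S)}$ by the larger quantity $\kappa_{\Omega^\ast}$ in the final step via a holomorphic inclusion.
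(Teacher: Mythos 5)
Your proof is correct and is essentially identical to the paper's: both deduce the lower bound from the inclusion $\Omega^\ast\subseteq\Omega$, then use Proposition~\ref{Prop:loc-2} to get $B_\Omega(z,S)\subset\Omega^\ast$ for $z\in\Omega_1$, and chain the inclusions $B_\Omega(z,S)\subset\Omega^\ast\subset\Omega$ with Lemma~\ref{lem:local-disc} applied to $D=\Omega$ to obtain the uniform upper bound. Your write-up merely spells out the intermediate steps that the paper leaves implicit.
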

\begin{proof}
Let $S>0$ be given by Proposition~\ref{Prop:loc-2}. Hence, for every $z\in \Omega_1$, $B_\Omega(z, S)\subset  \Omega^\ast\subset \Omega$, from which we get for all $v\in \C$
\[
\kappa_\Omega(z;v)\leq \kappa_{\Omega^\ast}(z;v)\leq  \kappa_{B_\Omega(z, S)}(z;v).
\]
The result then follows from Lemma~\ref{lem:local-disc}.
\end{proof}

As an immediate consequence of Theorem~\ref{Thm:loc-inf}, we have:
\begin{corollary}\label{Cor:estimk}
Let $\Omega\subsetneq \C$ be a domain asymptotically starlike at infinity of parabolic type. Then for every absolutely continuous curve $\gamma:[0,1]\to \Omega_1$,
\[
\ell_\Omega(\gamma)\leq \ell_{\Omega^\ast}(\gamma)\leq c \ell_\Omega(\gamma),
\]
where $c>1$ is given by Theorem~\ref{Thm:loc-inf}.
In particular, if $z,w\in \Omega_1$ are two points such that a  geodesic in  $\Omega$ joining $z$ with $w$ is contained in $\Omega_1$, then
\[
k_\Omega(z,w)\leq k_{\Omega^\ast}(z,w)\leq c k_\Omega(z,w).
\]
\end{corollary}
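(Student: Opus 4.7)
The plan is to deduce Corollary~\ref{Cor:estimk} by integrating the pointwise estimates of Theorem~\ref{Thm:loc-inf} along curves, and then to use the standard distance-decreasing property for the inclusion $\Omega^\ast\hookrightarrow\Omega$.

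First I would establish the length inequalities. Given an absolutely continuous curve $\gamma:[0,1]\to\Omega_1$, by the definition of hyperbolic length
\[
\ell_\Omega(\gamma)=\int_0^1\kappa_\Omega(\gamma(\tau);\gamma'(\tau))\,d\tau,\qquad \ell_{\Omega^\ast}(\gamma)=\int_0^1\kappa_{\Omega^\ast}(\gamma(\tau);\gamma'(\tau))\,d\tau.
\]
Since $\gamma(\tau)\in\Omega_1$ for every $\tau\in[0,1]$, Theorem~\ref{Thm:loc-inf} applies at every point of the curve, and integrating the inequality $\kappa_\Omega(\gamma(\tau);\gamma'(\tau))\leq \kappa_{\Omega^\ast}(\gamma(\tau);\gamma'(\tau))\leq c\,\kappa_\Omega(\gamma(\tau);\gamma'(\tau))$ against $d\tau$ yields $\ell_\Omega(\gamma)\leq \ell_{\Omega^\ast}(\gamma)\leq c\,\ell_\Omega(\gamma)$. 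Note that this step implicitly requires $\gamma$ to actually be a curve in $\Omega^\ast$ so that $\ell_{\Omega^\ast}(\gamma)$ makes sense; this is guaranteed because $\Omega_1\subset\Omega^\ast$ by Remark~\ref{Rem:inside-Omegastar}.

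Next I would derive the distance estimates in the ``in particular'' clause. Let $z,w\in\Omega_1$ and let $\gamma$ be a geodesic in $\Omega$ from $z$ to $w$ which lies entirely in $\Omega_1$; then $\ell_\Omega(\gamma)=k_\Omega(z,w)$. Again since $\Omega_1\subset\Omega^\ast$, the curve $\gamma$ also connects $z$ and $w$ inside $\Omega^\ast$, so by the length estimate just proved
\[
k_{\Omega^\ast}(z,w)\leq \ell_{\Omega^\ast}(\gamma)\leq c\,\ell_\Omega(\gamma)=c\,k_\Omega(z,w).
\]
The remaining bound $k_\Omega(z,w)\leq k_{\Omega^\ast}(z,w)$ follows from the general distance-decreasing property of holomorphic maps applied to the inclusion $\Omega^\ast\hookrightarrow\Omega$, which is legitimate because $\Omega^\ast\subseteq\Omega$ by Lemma~\ref{Lem:starlike-fication}.

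There is essentially no obstacle in this proof: the result is a routine integration of Theorem~\ref{Thm:loc-inf} combined with the inclusion $\Omega_1\subset\Omega^\ast\subseteq\Omega$. The only point deserving care is the justification that the curve $\gamma$, or the chosen geodesic, actually lies in the common region $\Omega_1$ so that the pointwise bound of Theorem~\ref{Thm:loc-inf} may be invoked at every parameter value, and that $\Omega_1\subset\Omega^\ast$ ensures the $\Omega^\ast$-length is well defined; both are handled by Remark~\ref{Rem:inside-Omegastar}.
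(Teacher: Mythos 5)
Your proof is correct and follows exactly the route the paper treats as immediate: integrating the pointwise estimate of Theorem~\ref{Thm:loc-inf} along $\gamma$ (using $\Omega_1\subset\Omega^\ast$ so the $\Omega^\ast$-length is defined) to obtain the length inequality, then applying it to the assumed geodesic and invoking the distance-decreasing property of the inclusion $\Omega^\ast\hookrightarrow\Omega$ for the lower bound. No gaps; this matches the intended argument.
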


\section{Gromov hyperbolic domains  asymptotically starlike at infinity of parabolic type}\label{Sec:Gromov}

In this section we assume that $\Omega\subsetneq \C$ is a Gromov hyperbolic
domain  asymptotically starlike at infinity of parabolic type.
In particular, our discussion includes the case when $\Omega$  is the Koenigs domain of a parabolic univalent self-map of the unit disc.

\begin{theorem}\label{Thm:sigma-qg_gen}
  Let $\Omega\subsetneq \C$ be a Gromov hyperbolic domain asymptotically starlike at infinity of parabolic type. Assume that $\omega : [1,+\infty) \to \Omega^*$ is an absolutely continuous curve such that
  $$ \lim \limits_{t \to +\infty} \delta_{\Omega}(\omega(t))=+\infty \, . $$
  Then $\omega$ is a quasi-geodesic in $\Omega$ if and only if $\omega$ is a quasi-geodesic in $\Omega^*$.
\end{theorem}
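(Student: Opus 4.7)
The strategy combines three ingredients: the infinitesimal bi-Lipschitz comparison $\kappa_\Omega\le\kappa_{\Omega^*}\le c\kappa_\Omega$ on $\Omega_1$ (Theorem~\ref{Thm:loc-inf} and Corollary~\ref{Cor:estimk}); the localization Proposition~\ref{Prop:loc-1}, which guarantees that for any fixed radius $M$ there is $r>1$ such that $B_\Omega(z,M)\subset\Omega_1\subset\Omega^*$ for every $z\in\Omega_r$; and the shadowing lemma for quasi-geodesics, applied in both $\Omega$ (Gromov hyperbolic by assumption) and $\Omega^*$ (Gromov hyperbolic because it is simply connected, by Lemma~\ref{Lem:starlike-fication}). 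The hypothesis $\delta_\Omega(\omega(t))\to+\infty$ eventually places $\omega$ inside $\Omega_r$ for arbitrary $r$, and this is precisely what allows the infinitesimal comparisons to be promoted to comparisons of hyperbolic distances between points on $\omega$.

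I would first fix $T_0\ge 1$ so large that $\omega([T_0,+\infty))\subset\Omega_r$ for a value of $r$ satisfying Proposition~\ref{Prop:loc-1} with the shadowing radius $M$ to be specified. For the forward implication (``$\omega$ is quasi-geodesic in $\Omega$'' $\Rightarrow$ ``$\omega$ is quasi-geodesic in $\Omega^*$''), I would apply the shadowing lemma in $\Omega$: the $\Omega$-geodesic $\eta$ joining $\omega(s)$ to $\omega(t)$ stays within $k_\Omega$-distance $M$ from $\omega([s,t])$, and therefore inside $\Omega^*$ by the choice of $r$. Corollary~\ref{Cor:estimk} then yields both
\[
k_{\Omega^*}(\omega(s),\omega(t))\le \ell_{\Omega^*}(\eta)\le c\,\ell_\Omega(\eta)=c\,k_\Omega(\omega(s),\omega(t))
\]
and the length comparison $\ell_{\Omega^*}(\omega;[s,t])\le c\,\ell_\Omega(\omega;[s,t])$. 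Combining these with the quasi-geodesic assumption in $\Omega$ produces the quasi-geodesic inequality in $\Omega^*$ on the tail $[T_0,+\infty)$; the initial compact portion contributes at most a bounded amount, absorbed into the additive constant.

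The converse (``$\omega$ quasi-geodesic in $\Omega^*$'' $\Rightarrow$ ``$\omega$ quasi-geodesic in $\Omega$'') follows a symmetric strategy but contains the main obstacle. Applying shadowing now in $\Omega^*$ produces an $\Omega^*$-geodesic $\eta^*$ joining $\omega(s)$ to $\omega(t)$ which lies within $k_{\Omega^*}$-distance (hence $k_\Omega$-distance, since $k_\Omega\le k_{\Omega^*}$) $M$ from $\omega([s,t])$, so that $\eta^*\subset\Omega_1$ for $s,t\ge T_0$. Corollary~\ref{Cor:estimk} gives $\ell_\Omega(\eta^*)\ge (1/c)\,k_{\Omega^*}(\omega(s),\omega(t))$, but one still needs the complementary bound $\ell_\Omega(\eta^*)\le C\,k_\Omega(\omega(s),\omega(t))+C'$, i.e., that $\eta^*$ is itself a quasi-geodesic in $\Omega$. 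This is the hard step, since a priori $\Omega$ could offer shortcuts through $\Omega\setminus\Omega^*$. My plan is as follows: for pairs of points of $\eta^*$ whose $\Omega^*$-arc-length separation is below a fixed threshold, Lemma~\ref{Lem:disc-quasi-total} together with Proposition~\ref{Prop:loc-1} forces the $\Omega$-geodesic between them to remain in $\Omega^*$, whence Corollary~\ref{Cor:estimk} yields the local comparison $k_{\Omega^*}\le c\,k_\Omega$ on such pairs; a discretization of $\eta^*$ combined with Gromov hyperbolicity of $\Omega$ then upgrades this local bi-Lipschitz estimate into the global quasi-geodesic property of $\eta^*$ in $\Omega$. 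Once this is established, the desired bound $k_{\Omega^*}(\omega(s),\omega(t))\le C\,k_\Omega(\omega(s),\omega(t))+C'$ follows from $k_{\Omega^*}\le c\,\ell_\Omega(\eta^*)\le cC\,k_\Omega+cC'$, and the quasi-geodesic property of $\omega$ in $\Omega$ is obtained by combining $\ell_\Omega(\omega;[s,t])\le \ell_{\Omega^*}(\omega;[s,t])\le A^*k_{\Omega^*}(\omega(s),\omega(t))+B^*$ with this comparison.
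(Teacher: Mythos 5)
Your forward implication ($\Omega \Rightarrow \Omega^*$) is correct but includes unnecessary work: since $\Omega^* \subset \Omega$ gives $k_\Omega \le k_{\Omega^*}$ for free, and Corollary~\ref{Cor:estimk} gives $\ell_{\Omega^*}(\omega;[s,t]) \le c\,\ell_\Omega(\omega;[s,t])$ on the tail where $\omega$ lies in $\Omega_1$, one immediately obtains
\[
\ell_{\Omega^*}(\omega;[s,t]) \le c\bigl(A\,k_\Omega(\omega(s),\omega(t))+B\bigr) \le cA\,k_{\Omega^*}(\omega(s),\omega(t))+cB.
\]
No shadowing is needed in this direction, and indeed the paper dispatches it in two lines.

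In the converse direction there is a genuine gap. You correctly identify the crux --- one must show $k_{\Omega^*}(\omega(s),\omega(t)) \le C\,k_\Omega(\omega(s),\omega(t))+C'$ --- and your idea of reducing to the $\Omega^*$-geodesic $\eta^*$ that shadows $\omega$ is reasonable. But the final paragraph, in which you propose to discretize $\eta^*$ into small pieces, apply Lemma~\ref{Lem:disc-quasi-total} and Proposition~\ref{Prop:loc-1} to constrain the $\Omega$-geodesics between nearby points of $\eta^*$, and then ``upgrade'' via Gromov hyperbolicity, doesn't go through as stated. Proposition~\ref{Prop:loc-1} requires its \emph{center} $z_0$ to lie in $\Omega_r$ for an $r$ that grows with the radius $R$; your $\eta^*$ is only guaranteed to lie in $\Omega_1$, which is not enough to invoke it. The hypothesis $\delta_\Omega(\omega(t))\to+\infty$ puts $\omega$ --- not $\eta^*$ --- in $\Omega_r$ for every $r$, and this is precisely why the paper runs the bootstrap directly on $\omega$ rather than on a shadowing geodesic. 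The paper's argument is an explicit induction $(A_N)$: it covers $\omega$ by $R'$-balls of $\Omega$ centered at points of $\omega$ (which \emph{are} in $\Omega_r$), shows the restriction of $\omega$ to each ball is an $(Ac,B)$-quasi-geodesic in $\Omega$, and then proves that if $\omega$ is $(Ac,B)$-quasi-geodesic on two consecutive pieces then it is $(Ac,B)$-quasi-geodesic on their union, using the Rips thinness of the triangle formed by the $\Omega$-geodesic $\gamma$ joining the extreme endpoints and the two $\Omega$-geodesics shadowing the quasi-geodesic pieces. The Gromov constant $G$ and the shadowing constant $M$ are chosen in advance so that $R=2R'\ge 2(G+M)$, ensuring $\gamma$ stays within $G+M<R$ of a point $\omega(u_1)\in\Omega_r$ and hence inside $\Omega_1$, where Corollary~\ref{Cor:estimk} applies. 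Your phrase ``a discretization combined with Gromov hyperbolicity then upgrades this local bi-Lipschitz estimate into a global quasi-geodesic property'' is exactly the nontrivial content of that induction, and cannot be waved through; moreover carrying it out along $\eta^*$ would still require you to re-center each application of Proposition~\ref{Prop:loc-1} at points of $\omega$, at which point the detour through $\eta^*$ has gained you nothing over the paper's direct treatment of $\omega$.
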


\begin{proof} The proof of the only-if part is straightforward. In fact, by assumption, there is $t_1 \ge 0$ such that $\omega(t) \in \Omega_1$ for all $t \ge t_1$. If $c>1$ denotes the constant from Theorem \ref{Thm:loc-inf} and if $\omega : [1,+\infty) \to \Omega^*$ is a $(A,B)$-quasi-geodesic in $\Omega$, then  it follows directly from the definitions and Corollary \ref{Cor:estimk} that $\omega : [t_1,+\infty)$ is a $(Ac,Bc)$-quasi-geodesic in $\Omega^*$, so $\omega : [1,+\infty)$ is a $(Ac,B')$-quasi-geodesic in $\Omega^*$  with $B'= (1+Ac)\ell_{\Omega^*}(\omega;[1,t_1])+Bc$.
We next prove the if-part, which is more difficult to handle. Let $\omega : [1,+\infty) \to \Omega^*$ be a $(A,B)$-quasi-geodesic in $\Omega^*$, that is,
 \begin{equation}\label{Eq:AB-qg}
\ell_{\Omega^\ast}(\omega;[s,t])\leq A k_{\Omega^\ast}(\omega(s),\omega(t))+B
\end{equation} 
  for all  $s \le t$ in $[1,+\infty)$.
Let $G>0$ be the Gromov constant of $\Omega$ and let $c>1$ be given by Theorem~\ref{Thm:loc-inf}.
By $M>0$ we denote  the constant given by the Gromov shadowing lemma for the $(Ac,B)$-quasi-geodesics of $\Omega$ (see \eqref{Eq:shadows}).

Finally, let $R'\geq G+M$, $R=2R'$ and let $r>1$ be given by Proposition~\ref{Prop:loc-1}. Since
$\delta_{\Omega}(\omega(t)) \to +\infty$ as $t \to +\infty$, 
 there exists $t_r>0$ such that $\omega(t)\in \Omega_r$ for every $t\geq t_r$.

By \eqref{Eq:AB-qg}, taking into account that $\Omega^\ast\subset\Omega$, we
have for all $s \le t$ in $[1,+\infty)$,
\begin{equation}\label{eq:est1}
\ell_\Omega(\omega;[s,t])\leq \ell_{\Omega^\ast}(\omega;[s,t])\leq A k_{\Omega^\ast}(\omega(s),\omega(t))+B.
\end{equation}
Now, take $ s_0\geq t_r$ and let $t_0>s_0$ be such that $\omega(t)\in B_\Omega(\omega(s_0), R')$ for all $t\in [s_0,t_0]$. By Proposition~\ref{Prop:loc-1}, $B_\Omega(\omega(s_0), R)\subset \Omega_1$. By Lemma~\ref{Lem:disc-quasi-total},  since for every $z,w\in B_\Omega(\omega(s_0), R')$ the geodesic  in $\Omega$ joining $z$ to $w$ is contained in $B_\Omega(\omega(s_0), R)$, it follows from Corollary~\ref{Cor:estimk} that 
\[
k_{\Omega^\ast}(\omega(s), \omega(t))\leq c k_{\Omega}(\omega(s), \omega(t))
\]
 for all $s,t\in [s_0,t_0]$. Therefore, by \eqref{eq:est1},
 $\omega|_{[s_0,t_0]}$ is an $(Ac, B)$-quasi-geodesic in $\Omega$ for every $t_r\leq s_0<t_0$ such that $\omega(t)\in B_\Omega(\omega(s_0), R')$ for all $t\in [s_0,t_0]$.
 
 Now,  we  prove the following statement for all $N\in \N$, $N\geq 1$: 
\medskip 

\noindent{\sl $(A_N)$} if  $t_r\leq a< b$ and there exist
$a=s_0<s_1<\ldots<s_N=b$,  such that $\omega(t)\in B_\Omega(\omega(s_j), R')$
for all $t\in [s_j,s_{j+1}]$, $j=0,\ldots, N-1$ then $\omega|_{[a,b]}$ is a
$(Ac, B)$-quasi-geodesic in $\Omega$. 

\medskip
 
We argue by induction. We already proved that $(A_1)$ holds.  Assuming that $(A_j)$ holds for $j=1,\ldots, N$, we have to prove that $(A_{N+1})$ holds as well.

Let then $t_r\leq a< b$ and assume there exist $a=s_0<s_1<\ldots<s_{N+1}=b$ such that $\omega(t)\in B_\Omega(\omega(s_j), R')$ for all $t\in [s_j,s_{j+1}]$, $j=0,\ldots, N$. We have to show that for every $a\leq s\leq t\leq b$, 
\[
\ell_\Omega(\omega;[s,t])\leq Ac k_\Omega(\omega(s), \omega(t))+B.
\]
By induction, if $t\leq s_N$, or $s\geq s_N$,  the result is true. So we can
assume that $s\in [s_0,s_N)$ and $t\in (s_N, s_{N+1}]$. By \eqref{eq:est1},
and arguing as before, it is enough to prove that a geodesic $\gamma:[0,1]\to
\Omega$ in $\Omega$ such that $\gamma(0)=\omega(s)$ and $\gamma(1)=\omega(t)$,
is contained in $\Omega_1$.  To this aim, let $\gamma_1:[0,1]\to \Omega$ be a
geodesic in $\Omega$ such that $\gamma_1(0)=\omega(s)$ and
$\gamma_1(1)=\omega(s_N)$ and let $\gamma_2:[0,1]\to \Omega$ be a geodesic in
$\Omega$ such that $\gamma_2(0)=\omega(s_N)$ and
$\gamma_2(1)=\omega(t)$. Since $\omega|_{[s,s_N]}$ and $\omega|_{[s_N,t]}$ are
$(Ac, B)$-quasi-geodesics in $\Omega$ by induction hypothesis, it follows from Gromov's shadowing lemma that for each $u\in [0,1]$, there exist $u_1\in [s,s_N]$ and $u_2\in [s_N, t]$ such that 
\begin{equation}\label{Eq:G-qc1}
k_\Omega(\gamma_j(u), \omega(u_j))<M, \quad j=1,2.
\end{equation}

Let now $w\in [0,1]$. Since $\Omega$ is Gromov hyperbolic, and $\{\gamma, \gamma_1,\gamma_2\}$ is a geodesic triangle in $\Omega$, $\gamma(w)$ stays at  hyperbolic distance less than $G$ from $\gamma_1\cup\gamma_2$. Thus,   there exists $u\in [0,1]$ such that 
\[
\min_{j=1,2} k_\Omega(\gamma(w),\gamma_j(u))<G.
\]
We can assume that $k_\Omega(\gamma(w),\gamma_1(u))<G$ (the case $k_\Omega(\gamma(w),\gamma_2(u))<G$ is similar).

Therefore, if $u_1$ is the point given by \eqref{Eq:G-qc1}, we have
\[
k_\Omega(\gamma(w),\omega(u_1))\leq k_\Omega(\gamma(w),\gamma_1(u))+k_\Omega(\gamma_1(u),\omega(u_1))<G+M.
\]
Since $R> G+M$ and $\omega(u_1)\in \Omega_r$, it follows from Proposition~\ref{Prop:loc-1} that $\gamma(w)\in \Omega_1$, and hence, by the arbitrariness of $w\in[0,1]$, $(A_{N+1})$ holds.

Now, since by compactness for every $t_r\leq a\leq b$ one can cover
$\omega([a,b])$ with a finite number of hyperbolic balls of radius $R'$
centered at points of $\omega([a,b])$, it follows from $(A_N)$ that
$\omega|_{[t_r,+\infty)}$ is a $(Ac,B)$-quasi-geodesic in $\Omega$. Therefore,
taking $B':= (1+Ac) \ell_\Omega(\omega;[1,t_r])+B$, we see that $\omega:[1,+\infty)\to \Omega$ is a $(Ac,B')$-quasi-geodesic in $\Omega$.
\end{proof}

We are now in a position to construct from the euclidean shape of $\Omega$ a curve $\sigma : [1,+\infty) \to \Omega$, which is 
a quasi-geodesic both in $\Omega$ and $\Omega^*$.

\medskip
{\sl Assumption:} We assume that $0\not\in\Omega$ and $it\in \Omega$ for all $t>0$.
\medskip

Note that, since $\Omega$ is asymptotically starlike at infinity of parabolic type and $\Omega\neq\C$, unless $\Omega$ is a vertical half-plane (and hence it is simply connected and starlike at infinity), there always exists $p\in \C$ such that $p\not\in\Omega$ and $p+it\in \Omega$ for all $t>0$, so  $\Omega-p$ satisfies the previous Assumption.

Note also that if $\Omega$ satisfies the Assumption, then $0\not\in\Omega^\ast$ and $it\in \Omega$ for all $t>0$.
 We define  $\sigma:[1,+\infty)\to \Omega^\ast$ by
\begin{equation} \label{eq:sigma}
\sigma(t):=\frac{\delta^+_{\Omega^\ast,0}(t)-\delta^-_{\Omega^\ast,0}(t)}{2}+it.
\end{equation}
In \cite[Lemma 4.1]{BCDGZ}, it is proved that $\sigma$ is $2$-Lipschitz and in \cite[Theorem 4.2]{BCDGZ} it is shown that $\sigma$ is a quasi-geodesic in $\Omega^\ast$. 
Our aim is to show that $\sigma$ is a quasi-geodesic in $\Omega$. 
\begin{lemma}\label{Lem:curve-in}
  Let $\Omega \subsetneq \C$ be a domain asymptotically starlike at infinity of parabolic type 
  such that $0 \not \in \Omega$ and $it \in \Omega$ for all $t>0$. Then for every $t>0$, 
\[
\delta^\pm_{\Omega^\ast,0}(t)\leq \delta^\pm_{\Omega, 0}(t)\leq \delta^\pm_{\Omega^\ast,0}(t)+\frac{1}{2}.
\]
In particular, $\lim \limits_{t \to +\infty} \delta_{\Omega}(\sigma(t))=+\infty$.
\end{lemma}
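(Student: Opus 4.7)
The plan splits naturally into the two sandwich inequalities and the ``in particular'' limit. The lower bound $\delta^\pm_{\Omega^\ast,0}(t)\leq\delta^\pm_{\Omega,0}(t)$ is immediate from $\Omega^\ast\subseteq\Omega$ (Lemma~\ref{Lem:starlike-fication}): since $\C\setminus\Omega\subseteq\C\setminus\Omega^\ast$, the infimum defining $\tilde\delta^\pm_{\Omega,0}(t)$ is taken over a subset of the one defining $\tilde\delta^\pm_{\Omega^\ast,0}(t)$, and truncation by $t$ preserves the inequality.

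For the upper bound $\delta^\pm_{\Omega,0}(t)\leq\delta^\pm_{\Omega^\ast,0}(t)+1/2$, the key tool is Lemma~\ref{Lema:dist-tau}(3): every $z\in\Omega$ with $\tau_z>0$ (equivalently, $z\in\Omega\setminus\Omega^\ast$, using Lemma~\ref{Lema:dist-tau}(1) to rule out $\tau_z=0$) admits $p\in\C\setminus\Omega$ with $\Re p=\Re z$ and $|z-p|\leq 1/2$. The plan is the following: given any $z\in\C\setminus\Omega^\ast$ with $\Re z\geq 0$, if $z\in\C\setminus\Omega$ set $p:=z$, otherwise $z\in\Omega\setminus\Omega^\ast$ and apply the lemma; in both cases $p\in\C\setminus\Omega$, $\Re p\geq 0$, and $|p-it|\leq|z-it|+1/2$. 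Taking the infimum over such $z$ yields $\tilde\delta^+_{\Omega,0}(t)\leq\tilde\delta^+_{\Omega^\ast,0}(t)+1/2$, and the corresponding bound for $\delta^+$ follows by separately treating the two cases $\tilde\delta^+_{\Omega^\ast,0}(t)\leq t$ (use the above directly) and $\tilde\delta^+_{\Omega^\ast,0}(t)>t$ (then $\delta^+_{\Omega^\ast,0}(t)=t\geq\delta^+_{\Omega,0}(t)$). The left-side inequality is identical.

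For the ``in particular'' assertion, $\Omega^\ast\subseteq\Omega$ gives $\delta_\Omega(\sigma(t))\geq\delta_{\Omega^\ast}(\sigma(t))$, so it suffices to show $\delta_{\Omega^\ast}(\sigma(t))\to+\infty$. Set $R(t):=\delta^+_{\Omega^\ast,0}(t)$, $L(t):=\delta^-_{\Omega^\ast,0}(t)$, $c(t):=(R(t)-L(t))/2$, so $\sigma(t)=c(t)+it$; since $0\in\C\setminus\Omega^\ast$ we have $\tilde\delta^\pm_{\Omega^\ast,0}(t)\leq t$, hence $\tilde\delta^\pm_{\Omega^\ast,0}(t)=\delta^\pm_{\Omega^\ast,0}(t)$, and therefore the half-discs $\{|z-it|<R(t),\ \Re z\geq 0\}$ and $\{|z-it|<L(t),\ \Re z\leq 0\}$ both lie in $\Omega^\ast$. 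Using the identity $|w-it|^2=|w-\sigma(t)|^2+2c(t)\Re w-c(t)^2$ and splitting by the sign of $\Re w$, a direct computation shows that $D(\sigma(t),r)\subset\Omega^\ast$ for every $r\leq\sqrt{\min(R(t),L(t))^2+c(t)^2}$, hence
\[
\delta_{\Omega^\ast}(\sigma(t))\geq\sqrt{\min(R(t),L(t))^2+c(t)^2}.
\]
The parabolic-type hypothesis then forces $\max(R(t),L(t))\to+\infty$: e.g., if $b=+\infty$, then for any $N>0$ the compact half-disc $K=\{|z|\leq N,\ \Re z\geq 0\}$ lies in $(a,+\infty)\times\R=\bigcup_{s\geq 0}(\Omega^\ast-is)$ by~\eqref{Eq:same-base}, so by Remark~\ref{compact-in-Omega} applied to $\Omega^\ast$ some translate $K+iT$ sits in $\Omega^\ast$, and starlikeness at infinity propagates this upward to give $R(t)\geq N$ for all large $t$; the case $a=-\infty$ is symmetric. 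Since $\sqrt{\min(R,L)^2+c^2}$ dominates both $\min(R,L)$ and $|c(t)|=(\max(R,L)-\min(R,L))/2$, it diverges as soon as $\max(R,L)$ does, and the proof is complete.

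The main obstacle is the planar-geometric estimate of the third paragraph: because the horizontal offset $c(t)$ of $\sigma(t)$ is in general nonzero, the largest disc centered at $\sigma(t)$ fitting in $\Omega^\ast$ has radius controlled by an asymmetric expression, and one has to verify carefully that the binding bound $\sqrt{\min(R,L)^2+c^2}$ captures whichever growth---of $\min(R,L)$ or of $|c|$---is supplied by the parabolic hypothesis, so that the bound actually diverges in every subcase ($a=-\infty$, $b=+\infty$, or both).
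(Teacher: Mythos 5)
Your proof is correct and essentially the same as the paper's: the sandwich inequality uses Lemma~\ref{Lema:dist-tau}(3) in exactly the same way, and the divergence of $\delta_\Omega(\sigma(t))$ comes from a planar lower bound in terms of $\delta^\pm_{\Omega^*,0}(t)$ combined with the parabolic hypothesis via \eqref{Eq:same-base} and Remark~\ref{compact-in-Omega}. The only cosmetic difference is that the paper delegates the geometric estimate to \cite[Lemma 4.3]{BCDGZ}, obtaining $\delta_\Omega(\sigma(t)) \geq \tfrac{1}{2\sqrt{2}}\bigl(\delta^+_{\Omega^*,0}(t)+\delta^-_{\Omega^*,0}(t)\bigr)$, whereas you re-derive a comparable bound $\delta_{\Omega^*}(\sigma(t))\ge\sqrt{\min(R(t),L(t))^2+c(t)^2}$ by hand; both dominate $\max(R,L)$ up to a constant and so diverge for the same reason.
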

\begin{proof}
Since $\Omega^\ast\subset\Omega$, it is clear that $\delta^\pm_{\Omega^\ast,0}(t)\leq \delta^\pm_{\Omega, 0}(t)$ for every $t>0$. In order to prove the other inequality,  let $t>0$ and let $p\in \C\setminus\Omega^\ast$, $\Re p\geq 0$, be such that $\delta^+_{\Omega^\ast,0}(t)=|t-p|$. If $p\not\in\Omega$, then $\delta^+_{\Omega^\ast,0}(t)= \delta^+_{\Omega, 0}(t)$. In case $p\in \Omega\setminus\Omega^\ast$, then $\tau_p>0$ and, by Lemma~\ref{Lema:dist-tau} (3), there exists $q\in \C\setminus\Omega$, $\Re q=\Re p$ such that $|p-q|\leq 1/2$. Therefore,
\[
\delta^+_{\Omega^\ast, 0}(t)=|it-p|\geq |it-q|-|p-q|\geq \delta^+_{\Omega,0}(t)-\frac{1}{2}.
\] 
A similar argument leads to the corresponding inequality for $\delta^-_{\Omega,0}(t)$.

Finally, a simple geometric argument as in \cite[Lemma 4.3]{BCDGZ} and using  $\delta^\pm_{\Omega^\ast,0}(t)\leq \delta^\pm_{\Omega, 0}(t)$ shows 
\begin{equation} \label{eq:dist}
\delta_{\Omega}(\sigma(t)) \ge \frac{1}{2 \sqrt{2}} \left( \delta^+_{\Omega^*,0}(t)+\delta^-_{\Omega^*,0}(t) \right) \, , \qquad t \ge 1 \, .
\end{equation}
Since $\Omega$ is parabolic, Lemma \ref{Lem:starlike-fication} implies that
$$ \bigcup \limits_{t \ge 0} (\Omega^*-it)$$
contains a vertical half-plane, so 
$\delta^+_{\Omega^\ast,0}(t)\to +\infty$ or $\delta^-_{\Omega^\ast,0}(t)\to +\infty$ as $t\to+\infty$.
\end{proof}

\begin{theorem}\label{Thm:sigma-qg}
Let $\Omega\subsetneq \C$ be a Gromov hyperbolic domain asymptotically starlike at infinity of parabolic type. Assume that $0\not\in\Omega$ and $it\in \Omega$ for all $t>0$. Then  $\sigma:[1,+\infty)\to \Omega$ defined by (\ref{eq:sigma}) is a quasi-geodesic in $\Omega$.
\end{theorem}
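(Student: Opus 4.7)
My plan is to recognize that Theorem~\ref{Thm:sigma-qg} is essentially a direct consequence of the general transfer principle established in Theorem~\ref{Thm:sigma-qg_gen}, combined with the already known quasi-geodesic property of $\sigma$ inside the starlike-fication $\Omega^\ast$ and the divergence of $\delta_\Omega \circ \sigma$ proved in Lemma~\ref{Lem:curve-in}. So the work is really bookkeeping, checking that the three hypotheses of Theorem~\ref{Thm:sigma-qg_gen} are met.

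First, I would verify that $\sigma$ is well-defined as an absolutely continuous curve into $\Omega^\ast$. By definition of $\delta^{\pm}_{\Omega^\ast,0}(t)$, the horizontal segment from $-\delta^-_{\Omega^\ast,0}(t) + it$ to $\delta^+_{\Omega^\ast,0}(t) + it$ lies in $\Omega^\ast$ (both endpoints are at the starlike-fication distance, and $\Omega^\ast$ is open and connected along horizontal segments at the level where these quantities are positive). In particular the midpoint $\sigma(t)$ belongs to $\Omega^\ast$. Moreover $\sigma$ is $2$-Lipschitz (this is \cite[Lemma 4.1]{BCDGZ}, which applies verbatim to $\Omega^\ast$ since $\Omega^\ast$ is starlike at infinity by Lemma~\ref{Lem:starlike-fication}), hence absolutely continuous.

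Next, I would invoke \cite[Theorem 4.2]{BCDGZ} applied to $\Omega^\ast$. Here I must observe that this application is legitimate: $\Omega^\ast$ is simply connected (Lemma~\ref{Lem:starlike-fication}), genuinely starlike at infinity, and, being simply connected, automatically Gromov hyperbolic with the Gromov constant of $\D$. Since $\Omega$ is of parabolic type and the base domain $(a,b)\times \R$ of $\Omega$ and $\Omega^\ast$ coincide by \eqref{Eq:same-base}, $\Omega^\ast$ is itself of parabolic type. Thus \cite[Theorem 4.2]{BCDGZ} yields constants $A \ge 1$, $B \ge 0$ so that $\sigma:[1,+\infty)\to \Omega^\ast$ is an $(A,B)$-quasi-geodesic in $\Omega^\ast$.

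Finally, Lemma~\ref{Lem:curve-in} gives $\lim_{t\to+\infty} \delta_\Omega(\sigma(t))=+\infty$, which is exactly the hypothesis needed to invoke Theorem~\ref{Thm:sigma-qg_gen}. The \emph{if}-direction of that theorem then transfers the quasi-geodesic property from $\Omega^\ast$ to $\Omega$, producing constants $(Ac, B')$ (with $c$ as in Theorem~\ref{Thm:loc-inf}) such that $\sigma$ is a quasi-geodesic in $\Omega$. No step is really difficult here; the only non-routine point to watch is that the underlying hypotheses of Theorem~\ref{Thm:sigma-qg_gen} and of the BCDGZ construction of $\sigma$ match, in particular that $\Omega^\ast$ is parabolic asymptotically starlike at infinity and that $\sigma$ actually leaves every hyperbolic compact subset of $\Omega$ (which is guaranteed by Lemma~\ref{Lem:curve-in} via the divergence of the euclidean distance to the boundary).
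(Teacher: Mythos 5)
Your proposal is correct and follows exactly the same route as the paper's proof: combine Lemma~\ref{Lem:curve-in} (divergence of $\delta_\Omega\circ\sigma$), \cite[Theorem 4.2]{BCDGZ} applied to $\Omega^\ast$ (which the paper also cites just above the theorem statement), and the if-direction of the transfer principle Theorem~\ref{Thm:sigma-qg_gen}. The paper states this in one line; you have merely spelled out the hypothesis-checking, all of which is accurate.
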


\begin{proof}
  By Lemma \ref{Lem:curve-in}, this follows from  Theorem \ref{Thm:sigma-qg_gen} and \cite[Theorem 4.2]{BCDGZ}.
\end{proof}

\section{Simply connected domains asymptotically starlike at infinity of parabolic type}

Let $\Omega\subsetneq \C$ be a simply connected domain  asymptotically starlike at infinity of parabolic type. We assume also that $0\not\in\Omega$ and $it\in \Omega$ for all $t>0$.

Let $h:\D\to \Omega$ be a Riemann map. Up to precomposing $h$ with a rotation we can assume that $\lim_{t\to+\infty}h^{-1}(it)=1$.

Let $\Omega^\ast$ be the starlike-fication of $\Omega$ and let $h^\ast:\D\to
\Omega^\ast$ be a Riemann map such that
$\lim_{t\to+\infty}(h^\ast)^{-1}(it)=1$. Finally, let $\sigma$ be the curve
defined by (\ref{eq:sigma}).

\begin{proposition}\label{Prop:same-in-d}
Let $\{z_n\}\subset\D$ be a sequence converging to $1$. Then the following are equivalent:
\begin{enumerate}
\item $\{z_n\}$ converges non-tangentially to $1$.
\item There exists $C_1>0$ such that $k_\Omega(\sigma([1,+\infty)), h(z_n))<C_1$ for all $n\in \N$.
\item There exists $C_2>0$ such that $k_{\Omega^\ast}(\sigma([1,+\infty)), h^\ast(z_n))<C_2$ for all $n\in \N$.
\end{enumerate}
\end{proposition}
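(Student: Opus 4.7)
The plan is to derive the equivalences by applying Proposition~\ref{non-tg-sector-hyp} separately to the pairs $(\Omega,h)$ and $(\Omega^\ast,h^\ast)$, using $\sigma$ as the common quasi-geodesic. Since $\Omega\subsetneq\C$ is simply connected it is Gromov hyperbolic, and because our standing assumption is in force, Theorem~\ref{Thm:sigma-qg} shows that $\sigma:[1,+\infty)\to\Omega$ is a quasi-geodesic in $\Omega$. By Lemma~\ref{Lem:starlike-fication}, $\Omega^\ast$ is also simply connected, hence Gromov hyperbolic, and \cite[Theorem 4.2]{BCDGZ} asserts that $\sigma$ is a quasi-geodesic in $\Omega^\ast$. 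Lemma~\ref{Lem:curve-in} gives $\delta_\Omega(\sigma(t))\to+\infty$, and the estimate (\ref{eq:dist}) in its proof yields $\delta_{\Omega^\ast}(\sigma(t))\to+\infty$; in both ambient spaces this forces $k_\Omega(\sigma(1),\sigma(t))\to+\infty$ and $k_{\Omega^\ast}(\sigma(1),\sigma(t))\to+\infty$, by completeness of the hyperbolic distance on proper domains.

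Applying Proposition~\ref{non-tg-sector-hyp} I obtain points $\tau_0,\tau^\ast\in\partial\D$ with $h^{-1}(\sigma(t))\to\tau_0$ and $(h^\ast)^{-1}(\sigma(t))\to\tau^\ast$, together with the characterizations: $\{z_n\}\subset\D$ converges non-tangentially to $\tau_0$ (resp.\ $\tau^\ast$) if and only if $k_\Omega(h(z_n),\sigma([1,+\infty)))$ (resp.\ $k_{\Omega^\ast}(h^\ast(z_n),\sigma([1,+\infty)))$) is bounded. Thus the entire statement reduces to the identification $\tau_0=\tau^\ast=1$: once this is verified, (1)$\Leftrightarrow$(2) is exactly the characterization in $\Omega$ and (1)$\Leftrightarrow$(3) is the one in $\Omega^\ast$.

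The identification $\tau_0=\tau^\ast=1$ is the main obstacle. My plan is to show $\sup_{t\geq 1}k_{\Omega^\ast}(it,\sigma(t))<+\infty$, which automatically gives $\sup_{t\geq 1}k_\Omega(it,\sigma(t))<+\infty$ because $\Omega^\ast\subseteq\Omega$; applying Proposition~\ref{non-tg-sector-hyp}(2) to the sequence $(h^\ast)^{-1}(it_n)$, which tends to $1$ by the normalization, then forces $\tau^\ast=1$, and analogously $\tau_0=1$. The direct horizontal segment from $\sigma(t)$ to $it$ is not sufficient: estimate (\ref{eq:dist}) only yields $|it-\sigma(t)|\leq\sqrt{2}\,\delta_{\Omega^\ast}(\sigma(t))$, just outside the Euclidean-ball localization. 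Instead I would follow an upward detour $\sigma(t)\to\sigma(t)+iS\to it+iS\to it$. The two vertical legs lie in $\Omega^\ast$ by starlikeness at infinity; the horizontal leg at height $t+S$ lies in $\Omega^\ast$ because $s\mapsto\delta^\pm_{\Omega^\ast,0}(s)$ is non-decreasing (the complement of a domain starlike at infinity is closed under downward translation). As $\Omega^\ast$ is of parabolic type, $\bigcup_{r\geq 0}(\Omega^\ast-ir)$ is a vertical half-plane or $\C$, so one of $\delta^\pm_{\Omega^\ast,0}(t+S)$ tends to $+\infty$ as $S\to+\infty$; choosing $S$ large makes the horizontal leg cheap in hyperbolic length, while the two vertical legs contribute a uniformly bounded amount. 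Making this geometric construction yield a bound independent of $t$ is the technical heart of the argument.
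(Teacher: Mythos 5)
Your high-level strategy is the one the paper uses: make $\sigma$ a quasi-geodesic in both $\Omega$ and $\Omega^\ast$ (via Theorem~\ref{Thm:sigma-qg} and \cite[Theorem 4.2]{BCDGZ}), note that it escapes to infinity, and then feed both pairs $(\Omega,h)$ and $(\Omega^\ast,h^\ast)$ into Proposition~\ref{non-tg-sector-hyp}, reducing everything to identifying $\tau_0=\tau^\ast=1$. Up to that point your argument is sound. However, the mechanism you propose for the identification is built on a false claim: it is \emph{not} true that $\sup_{t\geq 1}k_{\Omega^\ast}(it,\sigma(t))<+\infty$. Take, for instance, the starlike-at-infinity domain $\Omega=\Omega^\ast=\{z:\Re z>-1\}\setminus\{iy: y\le 0\}$, which satisfies the standing assumption $0\notin\Omega$, $it\in\Omega$ for $t>0$. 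Here $\delta^-_{\Omega^\ast,0}(t)\equiv 1$ while $\delta^+_{\Omega^\ast,0}(t)=t$, so $\sigma(t)=\tfrac{t-1}{2}+it$, $\delta_{\Omega^\ast}(\sigma(t))=\tfrac{t+1}{2}$, yet $\delta_{\Omega^\ast}(it)\equiv 1$. Since $k_{\Omega^\ast}(z,w)\geq \tfrac12\bigl|\log(\delta_{\Omega^\ast}(z)/\delta_{\Omega^\ast}(w))\bigr|$, we get $k_{\Omega^\ast}(it,\sigma(t))\gtrsim \log t\to+\infty$. The upward detour cannot rescue this: the horizontal leg at height $t+S$ still starts at a point whose distance to the left boundary is $\approx 1$ (in the parabolic case only \emph{one} of $\delta^{\pm}_{\Omega^\ast,0}$ need blow up as $S\to\infty$), so its hyperbolic length remains of order $\log t$; and if you try to let $S$ grow with $t$, the vertical leg from $i(t+S)$ down to $it$ becomes unbounded instead. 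Geometrically what is happening is that $(h^\ast)^{-1}(it)$ may approach $1$ \emph{tangentially}, while $(h^\ast)^{-1}(\sigma(t))$ approaches non-tangentially; they land at the same boundary point but diverge in hyperbolic distance, so bounded hyperbolic distance is simply the wrong notion to compare them.

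The paper gets the identification by a boundary-behavior argument rather than a metric one: it cites \cite[Lemma 5.2]{BCDGZ} for $(h^\ast)^{-1}(\sigma(t))\to 1$, and for $h^{-1}(\sigma(t))\to 1$ it observes that otherwise the horizontal segments joining $\sigma(n)$ to $in$ would form a sequence of Koebe arcs for $h$, contradicting the no-Koebe-arcs theorem. That is the ingredient missing from your sketch: you need to compare the \emph{landing points}, not the hyperbolic distances, of the two curves $t\mapsto it$ and $t\mapsto \sigma(t)$, and the natural tool for that is the no-Koebe-arcs (Gross star) theorem applied to the short horizontal connecting segments, whose Euclidean lengths are controlled by $\delta_{\Omega^\ast}(\sigma(t))$ via \eqref{eq:dist}. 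I would replace the paragraph about $\sup_t k_{\Omega^\ast}(it,\sigma(t))$ by this Koebe-arcs argument; once $\tau_0=\tau^\ast=1$ is in hand, the rest of your write-up goes through.
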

\begin{proof}
By \cite[Lemma 5.2]{BCDGZ}, $\lim_{t\to+\infty}(h^\ast)^{-1}(\sigma(t))=1$. The same argument used in such a lemma can be applied to show that $\lim_{t\to+\infty}h^{-1}(\sigma(t))=1$ as well---very sketchy, if this were not the case, the horizontal segments joining $\sigma(n)$ to $in$, $n\in\N$ would form a sequence of Koebe's arcs for $h$, contradicting the no Koebe's arcs Theorem. 

Now,  $\Omega$ is simply connected, hence $(\Omega, k_\Omega)$ is Gromov hyperbolic. By Theorem~\ref{Thm:sigma-qg}, $\sigma$ is a quasi-geodesic both in $\Omega$ and $\Omega^\ast$, and hence the result follows from Proposition~\ref{non-tg-sector-hyp}.
\end{proof}

\begin{theorem}\label{Thm:same-conv}
Let $\{w_n\}\subset \Omega$. Then the following are equivalent:
\begin{enumerate}
\item  $\{h^{-1}(w_n)\}$ converges non-tangentially to $1$.
\item $\{w_n\}$ is eventually contained in $\Omega^{\ast}$ and
  $\{(h^\ast)^{-1}(w_n)\}$ converges non-tangentially to $1$.
\end{enumerate}
\end{theorem}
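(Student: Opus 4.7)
The plan is to reduce both implications to Proposition \ref{Prop:same-in-d}, applied to two different sequences in $\D$ sharing the common intermediary $w_n \in \Omega$ (respectively $\Omega^*$): namely $z_n := h^{-1}(w_n)$ and, when meaningful, $\zeta_n := (h^*)^{-1}(w_n)$. Proposition \ref{Prop:same-in-d} characterizes non-tangential convergence of each of them to $1$ as boundedness of the $k_\Omega$-distance, respectively $k_{\Omega^*}$-distance, from $w_n$ to the curve $\sigma([1,+\infty))$ defined by (\ref{eq:sigma}). Since $\Omega^* \subset \Omega$ with the inclusion holomorphic, one already has $k_\Omega(p,q) \le k_{\Omega^*}(p,q)$ for $p,q \in \Omega^*$; the reverse comparison will be obtained via the localization in Corollary \ref{Cor:estimk} once points are forced into $\Omega_1$.

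For (1) $\Rightarrow$ (2), I would start from $z_n \to 1$ non-tangentially and use Proposition \ref{Prop:same-in-d} to produce $t_n \ge 1$ with $k_\Omega(w_n, \sigma(t_n)) \le C$. Completeness of $(\Omega, k_\Omega)$ together with $z_n \to 1 \in \partial \D$ forces $k_\Omega(w_n, h(0)) \to +\infty$, whence $k_\Omega(\sigma(t_n), h(0)) \to +\infty$ and therefore $t_n \to +\infty$. Lemma \ref{Lem:curve-in} then gives $\delta_\Omega(\sigma(t_n)) \to +\infty$. I would next apply Proposition \ref{Prop:loc-1} with $R = 2C$ to obtain $r > 1$ such that, for $n$ large, $B_\Omega(\sigma(t_n), 2C) \subset \Omega_1 \subset \Omega^*$; this forces $w_n \in \Omega^*$ eventually, and by Lemma \ref{Lem:disc-quasi-total} any $k_\Omega$-geodesic between $w_n$ and $\sigma(t_n)$ stays in $\Omega_1$. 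Corollary \ref{Cor:estimk} then upgrades the bound to $k_{\Omega^*}(w_n, \sigma(t_n)) \le cC$. The isometry of $(h^*)^{-1}$ gives $k_\D(\zeta_n, (h^*)^{-1}(\sigma(t_n))) \le cC$, and since $(h^*)^{-1}(\sigma(t)) \to 1$ as $t \to +\infty$ (noted in the proof of Proposition \ref{Prop:same-in-d}) while the Euclidean diameter of hyperbolic balls of bounded radius shrinks to $0$ as their centers approach $\partial \D$, I conclude $\zeta_n \to 1$. A second application of Proposition \ref{Prop:same-in-d}, now in $\Omega^*$, finishes the direction.

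For (2) $\Rightarrow$ (1), Proposition \ref{Prop:same-in-d} applied to $\zeta_n \to 1$ non-tangentially immediately yields $t_n$ with $k_{\Omega^*}(w_n, \sigma(t_n)) \le C$, from which I obtain $k_\Omega(w_n, \sigma(t_n)) \le C$ for free by the non-expansion of $\Omega^* \hookrightarrow \Omega$. To recover $z_n \to 1$, I would again use completeness: $\zeta_n \to 1$ non-tangentially implies $k_\D(\zeta_n, 0) \to +\infty$, so $k_{\Omega^*}(w_n, h^*(0)) \to +\infty$ and hence $t_n \to +\infty$. Using $h^{-1}(\sigma(t)) \to 1$ and $k_\D(z_n, h^{-1}(\sigma(t_n))) \le C$, the same Euclidean-diameter argument gives $z_n \to 1$, and a final invocation of Proposition \ref{Prop:same-in-d} concludes non-tangential convergence.

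The hard part will be the direction (1) $\Rightarrow$ (2), where the hypothesis carries information only in the metric $k_\Omega$, yet the conclusion needs $w_n \in \Omega^*$ eventually just to make sense of $\zeta_n$. This is precisely the role of the localization machinery: Proposition \ref{Prop:loc-1} turns the bound $k_\Omega(w_n, \sigma(t_n)) \le C$, combined with $\delta_\Omega(\sigma(t_n)) \to +\infty$, into a concrete inclusion $w_n \in \Omega_1 \subset \Omega^*$, and Corollary \ref{Cor:estimk} then transfers the $k_\Omega$-bound to a $k_{\Omega^*}$-bound. Once this transfer is achieved, Proposition \ref{Prop:same-in-d} does the rest.
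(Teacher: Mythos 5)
Your proof is correct and follows essentially the same route as the paper: both reduce the statement to Proposition~\ref{Prop:same-in-d}, then use Lemma~\ref{Lem:curve-in}, Proposition~\ref{Prop:loc-1} and Corollary~\ref{Cor:estimk} to show the orbit $\{w_n\}$ is eventually absorbed into $\Omega_1\subset\Omega^\ast$ and to transfer the distance bound between $k_\Omega$ and $k_{\Omega^\ast}$. The only minor variations are that the paper uses Remark~\ref{disc-total-geo} (total geodesicity of hyperbolic balls in a simply connected domain) with $R=C$ where you invoke the more general Lemma~\ref{Lem:disc-quasi-total} with $R=2C$, and that the paper deduces $s_n\to+\infty$ from compactness of $\sigma([1,T])$ rather than from completeness of $(\Omega,k_\Omega)$.
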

\begin{proof}
If (1) holds, then by Proposition~\ref{Prop:same-in-d}, there exists $C>0$ such that $k_{\Omega}(\sigma([1,+\infty),w_n)< C$ for all $n\in \N$. Thus, for each $n\in \N$, we can find  $s_n\in [1,+\infty)$  such that
\[
k_\Omega(\sigma(s_n),w_n)< C.
\]
On the other hand, since for every $T>1$ the set $\sigma([1,T])$ is compact and $\{w_n\}$ eventually exits all compacta of $\Omega$, it follows that
\[
\lim_{n\to +\infty}k_\Omega(\sigma([1,T]),w_n)=+\infty.
\]
Therefore, $\{s_n\}$ eventually leaves each compact interval $[1,T]$, and by Lemma~\ref{Lem:curve-in}, for every $r>0$ there exists $n_r\in \N$ such that $\sigma(s_n)\in \Omega_r$ for all $n\geq n_r$. 

Let $R:=C$ and let $r>0$ be given by Proposition~\ref{Prop:loc-1}. Then, for each $n\geq n_r$, $\sigma(s_n)\in \Omega_r$ and, by Proposition~\ref{Prop:loc-1}, $B_\Omega(\sigma(s_n), C)\subset \Omega_1\subset\Omega^\ast$. 

In particular, $w_n\in \Omega^\ast$ for all $n\geq n_r$. Moreover, since $w_n\in B_\Omega(\sigma(s_n), C)$ and $B_\Omega(\sigma(s_n), C)$ is totally geodesic in $\Omega$ (see Remark \ref{disc-total-geo}), the geodesic in $\Omega$ joining $\sigma(s_n)$ to $w_n$ is contained in $\Omega_1$. It therefore follows from Corollary~\ref{Cor:estimk} that for all $n\geq n_r$,
\[
k_{\Omega^\ast}(\sigma(s_n),w_n)\leq c k_{\Omega}(\sigma(s_n),w_n)< cC.
\]
Therefore, by Proposition~\ref{Prop:same-in-d}, $\{(h^\ast)^{-1}(w_n)\}$ converges non-tangentially to $1$, and (2) holds.

If (2) holds, then by Proposition~\ref{Prop:same-in-d}, there exists $C>0$ such that $k_{\Omega^\ast}(\sigma([1,+\infty)),w_n)\leq C$ for all $n\geq n_0$. Therefore, 
\[
k_{\Omega}(\sigma([1,+\infty)),w_n)\leq k_{\Omega^\ast}(\sigma([1,+\infty)),w_n)< C,
\]
and, again by Proposition~\ref{Prop:same-in-d}, $\{h^{-1}(w_n)\}$ converges non-tangentially to $1$. 
\end{proof}

\section{Semigroup-fication and the proofs of Theorem~\ref{Thm:semigroupfication-intro} and Theorem~\ref{Thm:main}}\label{Sec:proof}

Let $f:\D \to \D$ be univalent with no fixed points in $\D$. Up to conjugation with a rotation, we can assume that $1$ is the Denjoy-Wolff point of $f$. Let $h:\D \to \C$ be the Koenigs function of $f$ given by Theorem~\ref{Thm:canonical-model}  and $\Omega:=h(\D)$ the Koenigs domain of $f$. 

Note that $\Omega$ is simply connected and asymptotically starlike at infinity. Moreover, $\Omega$ is parabolic if and only if $f$ is parabolic. Let $\Omega^\ast\subset \Omega$ be the starlike-fication of $\Omega$. By Lemma~\ref{Lem:starlike-fication}, $\Omega^\ast$ is a non-empty simply connected domain which is  starlike at infinity. Let $h^\ast:\D \to \Omega^\ast$ be a Riemann map and, for $t\geq 0$ and $z\in \D$, let
\[
\phi_t(z):=(h^\ast)^{-1}(h^\ast(z)+it).
\]
It is easy to see that $(\phi_t)$ is a continuous semigroup of holomorphic self-maps in $\D$ and $h^\ast$ is the Koenigs function of $(\phi_t)$. In view of \eqref{Eq:same-base}, $f$ is hyperbolic ({\sl respectively}, parabolic of positive/zero hyperbolic step) if and only if $(\phi_t)$ is hyperbolic ({\sl resp.}, parabolic of positive/zero hyperbolic step). Up to conjugation with a rotation, we can assume  that $1$ is the Denjoy-Wolff point of $(\phi_t)$.

\begin{remark}
It follows from Theorem~\ref{Thm:canonical-model}(4) and \eqref{Eq:same-base}, that if $f$ is hyperbolic (namely, $\alpha=\angle\lim_{z\to 1}f'(z)\in (0,1)$), then, setting $\lambda:=\log \alpha$,  
\[
\angle\lim_{z\to 1}\phi_t'(z)=e^{\lambda t}. 
\]
In other words, $f$ and $\phi_1$ have the same dilation coefficient at the Denjoy-Wolff point.
\end{remark}

In case $f$ fixes a point $z_0\in \D$, $f'(z_0)=e^{-\mu}$ for some $\mu\in \C$, $\Re \mu\geq 0$, and $h$ is the Koenigs function of $f$ (see Remark~\ref{Rem:elliptic-model}) we denote $\Omega:=h(\D)$. In this case, we can define a $\mu$-spirallike domain $\Omega^\ast$ starting from $\Omega$ by declaring $z\in \Omega^\ast$ if and only if $e^{-\mu t}z\in \Omega$ for all $t\geq 0$. Arguing in a similar fashion as we did before, one can easily prove that $\Omega^\ast$ is a simply connected domain, different from $\C$, $\mu$-spirallike ({\sl i.e.}, $e^{-t\mu}\Omega^\ast\subseteq \Omega^\ast$ for all $t\geq 0$) and contains $0$. Let $h^\ast:\D \to \Omega^\ast$ be a Riemann map and, for $t\geq 0$ and $z\in \D$, let
\[
\phi_t(z):=(h^\ast)^{-1}(e^{-\mu t}h^\ast(z)).
\]
As before, it  is easy to see that $(\phi_t)$ is a continuous semigroup of holomorphic self-maps in $\D$, $\phi_t(z_0)=z_0$ and $h^\ast$ is the Koenigs function of $(\phi_t)$. Moreover, $f'(z_0)=e^{-\mu}=\phi_1'(z_0)$.

\begin{definition}
The semigroup $(\phi_t)$ is called the {\sl semigroup-fication} of $f$.
\end{definition}

\begin{lemma}\label{Lem:premain}
Let $f:\D \to \D$ be univalent without fixed points in $\D$ and let $(\phi_t)$ be the semigroup-fication of $f$. Suppose that  $\{n_k\}\subset\N$ is an increasing sequence converging to $\infty$. Then the following are equivalent:
\begin{enumerate}
\item $\{f^{\circ n_k}(z)\}$ converges non-tangentially to the Denjoy-Wolff point of $f$ for some---and hence any---$z\in \D$,
\item $\{\phi_{n_k}(z)\}$ converges non-tangentially to the Denjoy-Wolff point of $(\phi_t)$  for some---and hence any---$z\in \D$.
\end{enumerate}
\end{lemma}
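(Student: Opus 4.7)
The plan is to reduce both conditions (1) and (2) to a single statement about one sequence $\{w_n\}\subset\Omega^*$, which can then be handled by Theorem~\ref{Thm:same-conv}.

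First I would normalize so that $1$ is the Denjoy--Wolff point of both $f$ and $(\phi_t)$ (possible by conjugation with a rotation of $\D$), and replace $h$ and $h^*$ by $h-p_0$ and $h^*-p_0$ for a suitable $p_0\in\C\setminus\Omega$ with $p_0+it\in\Omega$ for all $t>0$. Such a translation leaves the Koenigs property, the semiconjugacies $h\circ f=h+i$ and $h^*\circ\phi_1=h^*+i$, and every hyperbolic distance invariant, and ensures that the translated Koenigs domain $\Omega$ satisfies $0\notin\Omega$ and $it\in\Omega$ for all $t>0$. The only case in which no such $p_0$ exists is when $\Omega$ is a vertical half-plane; then $\Omega^*=\Omega$, $h$ and $h^*$ coincide up to an additive constant, $\phi_1=f$, and the lemma is trivial.

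Next, by Lemma~\ref{Lem:starlike-fication}, $\Omega^*$ is nonempty, so I would pick $p\in\Omega^*$ and set $z_1:=h^{-1}(p)\in\D$ and $z_1^*:=(h^*)^{-1}(p)\in\D$. Iterating the semiconjugacies gives
\[
f^{\circ n_k}(z_1)=h^{-1}(w_n),\qquad \phi_{n_k}(z_1^*)=(h^*)^{-1}(w_n),
\]
with $w_n:=p+in_k$. Since $p\in\Omega^*$ and $\Omega^*$ is starlike at infinity, the sequence $\{w_n\}$ lies entirely in $\Omega^*$.

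I would then split by dynamical type, which is the same for $f$ and $\phi_1$ by Theorem~\ref{Thm:semigroupfication-intro}(1). In the hyperbolic case, the classical Valiron--Cowen theorem guarantees that both $\{f^{\circ n}(z)\}$ and $\{\phi_n(z)\}$ converge non-tangentially to $1$ for every $z\in\D$, so (1) and (2) both hold along every subsequence $\{n_k\}$. In the parabolic case, $\Omega$ is simply connected and asymptotically starlike at infinity of parabolic type, the hypotheses of Theorem~\ref{Thm:same-conv} are met, and applying that theorem to $\{w_n\}$ directly yields that $\{h^{-1}(w_n)\}=\{f^{\circ n_k}(z_1)\}$ converges non-tangentially to $1$ if and only if $\{(h^*)^{-1}(w_n)\}=\{\phi_{n_k}(z_1^*)\}$ does.

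Finally, I would upgrade the equivalence from the specific base points $z_1,z_1^*$ to the stated \emph{for some -- hence any} form using the Schwarz--Pick inequality: since the pseudohyperbolic distance of $\D$ is non-increasing under both $f$ and $\phi_1$, non-tangential convergence along $\{n_k\}$ at one point is equivalent to the same along $\{n_k\}$ at any other point. The substantive step is the parabolic case, but all the analytic work is absorbed in Theorem~\ref{Thm:same-conv}; the main obstacle here is merely aligning the setup so that the two orbits are described by the same vertical sequence $\{w_n\}$ in $\Omega^*$.
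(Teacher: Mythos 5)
Your proposal is correct and follows essentially the same route as the paper: normalize the Denjoy--Wolff point to $1$ and translate $\Omega$ so that $0\notin\Omega$ and $it\in\Omega$ for all $t>0$; realize both orbits as $h^{-1}(w_k)$ and $(h^*)^{-1}(w_k)$ for a common vertical sequence $\{w_k\}\subset\Omega^*$; invoke Theorem~\ref{Thm:same-conv} in the parabolic case and the classical Valiron--Cowen result in the hyperbolic case; and upgrade to ``for some --- hence any'' via the Schwarz--Pick non-expansiveness of $f$ and $\phi_1$ combined with the characterization of non-tangential convergence through bounded hyperbolic distance from $[0,1)$. The paper chooses the base point $p=i$ (so $w_k=(1+n_k)i$) and handles the degenerate case $\Omega=\Omega^*$ directly, whereas you pick a generic $p\in\Omega^*$ and isolate only the vertical half-plane case; both are equivalent. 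The paper also sketches a second, self-contained quasi-geodesic argument in the hyperbolic case, but that is optional since the non-tangential convergence there is classical.
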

\begin{proof}
As remarked before, we can assume that $1$ is the Denjoy-Wolff point of both $f$ and $(\phi_t)$.

(i) We first  note that for all $z,w\in \D$ and $k\in \N$
\[
k_\D(f^{\circ n_k}(z), f^{\circ n_k}(w))\leq k_\D(z,w).
\]
Therefore, the sequences $\{f^{\circ n_k}(z)\}$ and $\{f^{\circ n_k}(w)\}$ stay at finite hyperbolic distance. By \cite[Proposition 4.5]{BCDG} (see also \cite[Corollary 6.2.6]{BCDbook}), a sequence $\{z_n\}\subset\D$ converges non-tangentially to $1$ if and only if it stays at finite hyperbolic distance from $[0,1)$. Therefore, by the triangle inequality, it follows that $\{f^{\circ n_k}(z)\}$ converges to $1$ non-tangentially if and only if so does $\{f^{\circ n_k}(w)\}$. A similar argument shows that $\{\phi_{ n_k}(z)\}$ converges to $1$ non-tangentially if and only if so does $\{\phi_{ n_k}(w)\}$ for any $w\in \D$. 

(ii) We now prove Lemma \ref{Lem:premain} in case  $f$ is parabolic, that is,  $\Omega$ is a simply connected domain which is asymptotically starlike at infinity of parabolic type.
If $\Omega=\Omega^\ast$, there is nothing to prove. Otherwise
there exists $p\in \C\setminus\Omega$ such that $p+it\in \Omega$ for all $t>0$. Up to  a translation, we can assume $p=0$.  Let $w_{k}:=i+n_ki$. Note that $\{w_k\}\subset\Omega^\ast$ for all $k\in \N$. By Theorem~\ref{Thm:same-conv}, $\{h^{-1}(w_k)\}$ converges non-tangentially to $1$ if and only if so does  $\{(h^\ast)^{-1}(w_k)\}$. Since 
\[
h^{-1}(w_k)=f^{\circ n_k}(h^{-1}(i)), \quad (h^\ast)^{-1}(w_k)=\phi_{n_k}((h^\ast)^{-1}(i)),
\]
the equivalence of (1) and (2)  follows from part (i).

(iii) Finally, we assume that $f$ is hyperbolic, so that $(\phi_t)$ is hyperbolic. As we already remarked, it is known that in this case both $\{f^{\circ n}(z)\}$ and $\{\phi_{n}(z)\}$ converge
non-tangentially to $1$, so that ``(1) $\Leftrightarrow$ (2)'' is trivially true. However, there is also a way to prove this last statement along the same lines as in the parabolic case above. We sketch it here.

Let $\Lambda:=\{0<\Re w<a\}=\bigcup_{n\in \N} (\Omega-in)$, $a>0$. Up to translation, we can assume that $\frac{a}{2}+it\in \Omega$ for all $t>0$. Then let $\eta:(0,+\infty)\to \Omega$ be defined as $\eta(t)=\frac{a}{2}+it$. Using standard estimates for the hyperbolic metric (see, {\sl e.g.} \cite[Theorem 3.4]{BCDG} or \cite[Theorem 5.2.2]{BCDbook}), we have for all $1\leq s\leq t$
\[
\ell_\Omega(\eta;[s,t])=\int_s^t\kappa_\Omega(\eta(r);\eta'(r))dr\leq \int_s^t\frac{dr}{\delta_\Omega(\eta(r))}\leq \frac{1}{C}(t-s),
\]
where $C:=\min\{\delta_\Omega(\eta(t)), \,t\in [1,+\infty)\}>0$. 

On the other hand (see, {\sl e.g.} \cite[Proposition 5.2]{BCDG}  or   \cite[Proposition 6.7.2]{BCDbook})
\[
k_\Omega(\frac{a}{2}+is,\frac{a}{2}+it)\geq k_\Lambda(\frac{a}{2}+is,\frac{a}{2}+it)=\frac{\pi}{2a}(t-s).
\]
From these inequalities, it follows that $\eta|_{[1,+\infty)]}$ is a $(\frac{2a}{\pi C},0)$-quasi-geodesic in $\Omega$ (and, by the same argument, it is a quasi-geodesic in $\Omega^\ast$). In particular, $h^{-1}(\eta(t))$ and $(h^\ast)^{-1}(\eta(t))$ converge to $1$ non-tangentially (see, {\sl e.g.} \cite[Remark 3.3]{BCDGZ} or \cite[Corollary 6.3.9]{BCDbook}). From this, we obtain as before that both $\{f^{\circ n}(z)\}$ and $\{\phi_{n}(z)\}$ converge non-tangentially to $1$. 
\end{proof}

Now we can prove Theorem~\ref{Thm:semigroupfication-intro}:

\begin{proof}[Proof of Theorem~\ref{Thm:semigroupfication-intro}]
Statements (1), (2) and (3)  follow directly from the previous considerations, while statement (5) is just Lemma~\ref{Lem:premain}. 

Statement (4) follows from Theorem~\ref{BRFP-model}, since every maximal $\mu$-spirallike sector ({\sl respectively}, maximal vertical strip) for $\Omega$ is clearly a maximal $\mu$-spirallike sector ({\sl resp.}, maximal vertical strip) for $\Omega^\ast$, and vice-versa.
\end{proof}

Finally, we have

\begin{proof}[Proof of Theorem~\ref{Thm:main}]
If $f$ is hyperbolic, the result is known, as we already explained. If $f$ is parabolic, let $(\phi_t)$ be the semigroup-fication of $f$. By Theorem~\ref{Thm:semigroupfication-intro}.(5) and Lemma~\ref{Lem:curve-in}, the statement of the theorem holds if and only if the same statement holds for the semigroup $(\phi_t)$. But for semigroups the result has been proved in  \cite[Theorem 1.1]{BCDGZ}.
\end{proof}

\end{document}